\theoremstyle{plain}
\newtheorem{thm}{Theorem}%[section]
\newtheorem{cor}[thm]{Corollary}
\newtheorem{lem}[thm]{Lemma} 
\newtheorem{prop}[thm]{Proposition}
\theoremstyle{definition}
\newtheorem{rem}[thm]{Remark}
\numberwithin{equation}{section}
\newcommand{\lgw}{\longrightarrow}
\newcommand{\lgm}{\longmapsto}
\newcommand{\si}{\sigma}
\newcommand{\ovl}{\overline}
\renewcommand{\deg}{\text{deg}\,}
\newcommand{\ord}{\text{ord}}
\newcommand{\uu}{\mathbf u}
\newcommand{\vv}{\mathbf v}
\newcommand{\wdt}{\widetilde}
\newcommand{\D}{\Delta}
\newcommand{\la}{\lambda}
\newcommand{\R}{\mathbb{R}}
\newcommand{\K}{\mathbb{K}}
\newcommand{\N}{\mathbb{N}}
\renewcommand{\L}{\mathbb{L}}
\newcommand{\C}{\mathbb{C}}
\newcommand{\Q}{\mathbb{Q}}
\renewcommand{\t}{\tau}
\renewcommand{\tt}{\mathbf{t}}
\renewcommand{\lg}{\langle}
\newcommand{\rg}{\rangle}
\newcommand{\q}{\mathbf{q}}
\renewcommand{\a}{\alpha}
\renewcommand{\b}{\beta}
\newcommand{\g}{\gamma}
\renewcommand{\phi}{\varphi}
\let\mathscr\mathcal
\begin{document}
\title[Topological local algebraicity]{Local topological  algebraicity with algebraic coefficients of analytic sets or functions}
\author{Guillaume Rond}
\email{guillaume.rond@univ-amu.fr}
\address{Aix Marseille Univ, CNRS, Centrale Marseille, I2M, Marseille, France}

\begin{abstract} We prove that any complex or real analytic set or function germ is topologically equivalent to  a germ defined by polynomial equations whose coefficients are algebraic numbers.
\end{abstract}

\subjclass[2010]{11G35, 13F25, 14B07, 32A05, 32B10,  32S05, 32S15}

\maketitle

The problem of the algebraicity of analytic sets or mappings is an old subject of study. It is known that the germ of a coherent analytic set with an isolated singularity is analytically equivalent to the germ of an algebraic set \cite{Ku, To1}. But in the general case the germ of an analytic set is not even locally diffeomorphic to the germ of an algebraic set \cite{Wh}. On the other hand, considering a weaker equivalence relation,  T. Mostowski proved that the germ of an analytic set is always homeomorphic to the germ of an algebraic set \cite{Mo} and this has been generalized to analytic function germs \cite{BPR}. \\
For practical, effective and sometimes even theoretical purposes (for instance see \cite{BW}) it is often not  possible to handle coefficients that are  transcendental numbers and so it is important to work with polynomial equations whose coefficients are  rational or  algebraic numbers. But it is well known that a small perturbation of the coefficients of polynomial equations defining an algebraic set germ or an algebraic function germ can drastically change the topology of the  germ.  \\
\\
The goal of this paper is to extend the results of \cite{BPR}  by proving that any complex or real analytic set or function germ is homeomorphic to an algebraic  germ defined over the algebraic numbers. Our main result is the following one:

\begin{thm} \label{thm2}  
Let  $\K = \R$ or $\C$.  
 Let $(V,0) \subset (\K^n,0)$ be an analytic set germ and $g: (V,0)\to (\K, 0)$ be an analytic function germ.  \\
 Then there is a homeomorphism 
 $$h : (\K^n,0) \to (\K^n,0)$$ such that 
 \begin{enumerate}
 \item[i)] $(h(V),0)$ is the germ of an algebraic subset of $\K^n$ defined over $\ovl\Q\cap\K$,
 \item[ii)]  $g\circ h^{-1}$ 
 is the germ of a polynomial function defined over $\ovl\Q\cap\K$.  \\
 \end{enumerate}
\end{thm}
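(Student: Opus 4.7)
The strategy is to start from the theorem of \cite{BPR}, which supplies a homeomorphism $h_0\colon(\K^n,0)\to(\K^n,0)$ such that $h_0(V)$ is the germ of an algebraic subset defined by polynomials $P_1,\dots,P_r$ and $g\circ h_0^{-1}$ is the germ of a polynomial $G$, but a priori only with coefficients in $\K$. Collecting the coefficients of $P_1,\dots,P_r,G$ into a parameter $a\in\K^N$, and letting $(V_t,G_t)$ denote the pair of set and function defined by polynomials of the same fixed degrees with coefficient vector $t$, we are reduced to finding $a'\in(\ovl\Q\cap\K)^N$ such that the germ $((V_{a'},0),G_{a'})$ is ambient-topologically equivalent to $((V_a,0),G_a)$.

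The plan to produce such an $a'$ is to exhibit a finite semi-algebraic partition $\K^N=\bigsqcup_i S_i$, with each $S_i$ defined over $\Q$, along each piece of which the germ of the pair $(V_t,G_t)$ at $0$ has constant ambient topological type. This should follow by applying Hardt's semi-algebraic triviality theorem together with the semi-algebraic version of Thom's first isotopy lemma to the family of germs $(V_t,G_t)$, restricted to a ball around the origin whose radius is semi-algebraic in $t$. Along each $S_i$, the pair $((V_t,0),G_t)$ is then transported to a fixed reference germ by an ambient homeomorphism of $(\K^n,0)$.

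Once the stratification is in hand, the conclusion follows by a classical transfer. The stratum $S_{i_0}$ containing $a$ is a nonempty semi-algebraic set defined over $\Q$, hence over $\ovl\Q\cap\R$. By model completeness of real closed fields (Tarski--Seidenberg), such a set must contain a point of $(\ovl\Q\cap\R)^N$; for $\K=\C$ one reduces to this case by realifying $\C^N\simeq\R^{2N}$ and identifying $\ovl\Q\cap\C$ with the complexification of $\ovl\Q\cap\R$. Choosing $a'\in S_{i_0}\cap(\ovl\Q\cap\K)^N$ and composing $h_0$ with the ambient homeomorphism produced by triviality along a semi-algebraic path from $a$ to $a'$ inside $S_{i_0}$ yields the desired $h$.

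The genuine obstacle is constructing the stratification with the precise topological content required: the triviality must be realized by an \emph{ambient} homeomorphism of $(\K^n,0)$ identifying simultaneously the set $V_t$ and the function $G_t$, and the Milnor-type radius in which this is achieved must depend semi-algebraically on $t$. Establishing this amounts to running the construction underlying \cite{BPR} in semi-algebraic families of parameters, and that is where I expect the bulk of the technical work to lie; the Tarski--Seidenberg transfer in the last step is then purely formal.
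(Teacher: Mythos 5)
Your route is genuinely different from the paper's, and the model-theoretic transfer at the end is correct: a nonempty semi-algebraic subset of $\R^N$ defined over $\Q$ meets $(\ovl\Q\cap\R)^N$, because $\ovl\Q\cap\R$ is real closed and elementarily embedded in $\R$, and the case $\K=\C$ reduces to this via $\ovl\Q=(\ovl\Q\cap\R)(i)$. But you have correctly flagged the stratification lemma as the heart of the matter, and I think you underestimate the difficulty of supplying it. What you need is a finite semi-algebraic partition of the coefficient space $\K^N$, \emph{definable over} $\Q$, on each piece of which the germs $((V_t,0),G_t)$ are pairwise equivalent via an ambient homeomorphism $h$ of $(\K^n,0)$ with $h(V_t)=V_{t'}$ \emph{and} $G_{t'}\circ h=G_t$, together with a Milnor-type radius that varies semi-algebraically. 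This is not an off-the-shelf result. Hardt's theorem trivializes the family of sets alone; Fukuda-type finiteness treats a map germ only up to left-right topological equivalence, which need not preserve $V_t$ as a set nor $G_t$ as a function; what is actually required is a parametrized, $\Q$-definable Thom--Mather argument for the pair. Your suggestion to obtain it by ``running \cite{BPR} in semi-algebraic families'' is the right diagnosis but not yet a proof: the machinery of \cite{BPR} (generic linear changes of coordinates, nested P\l oski approximation, Zariski equisingular deformations) is not manifestly $\Q$-definable, and carrying it out uniformly in families is at least as hard as the paper's direct argument.

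The paper avoids this entirely. It proves a new approximation result (Theorem \ref{thm_rat}): any algebraic power series solution in $\C\lg x\rg$ of a system over $\ovl\Q\lg x\rg[y]$ is a $\C$-point of a parametrized family of algebraic solutions defined over $\ovl\Q$. This upgrades the nested Artin--P\l oski--Popescu theorem of \cite{BPR} so that a \emph{single} Zariski equisingular one-parameter deformation already lands on a Nash germ over $\ovl\Q$; Parusi\'nski--P\u{a}unescu then trivializes that one-parameter family, and Artin--Mazur together with a Thom stratification handle the passage from Nash over $\ovl\Q$ to algebraic over $\ovl\Q$. In effect the paper trivializes a single curve in parameter space rather than stratifying the whole coefficient space, which sidesteps the $\Q$-definability problem your plan faces. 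Your approach would be an attractive alternative if the stratification lemma were established, but at present that step is a genuine gap.
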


Moreover when we consider the particular case where there is no function germ $g$ but only the set germ $(V,0)$ we can be more precise about the nature of the homeomorphism:

\begin{thm} \label{thm1}  
Let  $\K = \R$ or $\C$.  
Let  $(V,0) \subset (\K^n,0)$ be an analytic set germ.  
 Then there is a homeomorphism $h: (\K^n,0) \to (\K^n,0)$ such that 
 \begin{enumerate}
 \item[i)] $h(V)$ 
 is the germ of an algebraic subset of $\K^n$ defined over $\ovl \Q\cap\K$, 
  \item[ii)] $(V,0)$ is Whitney equisingular with $(h(V),0)$,

\item[iii)] $h$ is subanalytic and arc-analytic.\\
 \end{enumerate}
\end{thm}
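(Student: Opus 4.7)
The plan is to split the argument into two stages: first reduce to the case of an algebraic set germ over $\K$ using \cite{BPR}, then approximate the coefficients of a defining polynomial system by algebraic numbers while preserving Whitney equisingularity and the subanalytic, arc-analytic nature of the trivializing homeomorphism. For the first stage I would apply the set-germ version of the main theorem of \cite{BPR} to obtain a subanalytic, arc-analytic homeomorphism $h_0 : (\K^n,0) \to (\K^n,0)$ with $W := h_0(V)$ the germ at $0$ of an algebraic subset of $\K^n$, Whitney equisingular with $(V,0)$. It will then suffice to construct a second homeomorphism $h_1$ of the same type carrying $(W,0)$ onto an algebraic germ defined over $\ovl\Q\cap\K$, since the composite $h := h_1\circ h_0$ will satisfy all three conclusions.

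For the approximation, fix generators $P_1,\dots,P_r\in\K[x_1,\dots,x_n]$ of the ideal defining $W$ and let $c\in\K^N$ be their coefficient vector. The trivialization constructed in \cite{Mo, BPR} originates from a Zariski equisingular stratification obtained by iterated elimination of the variables $x_n,x_{n-1},\dots$, and the successive discriminants involved depend algebraically on the coefficients. I would consider the coefficient-parameter family of algebraic sets $\{W_{c'}\}_{c'\in\K^N}$ and the locus $Z\subset\K^N$ of parameters $c'$ for which Zariski equisingularity of this stratification persists along the segment from $c$ to $c'$. This locus should be cut out by nonvanishing of finitely many resultants and discriminants, hence constructible and Zariski-open at $c$. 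Over $\C$, density of $\ovl\Q$ in $\C$ makes $Z\cap(\ovl\Q)^N$ nonempty in every neighbourhood of $c$; over $\R$, Tarski-Seidenberg implies that $Z\cap\R^N$ is semi-algebraic with nonempty interior near $c$ and hence contains points of $(\ovl\Q\cap\R)^N$. I would then pick such a $c'$ arbitrarily close to $c$.

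The Zariski equisingular family over the segment from $c$ to $c'$, integrated via Mostowski's vector fields as in \cite{Mo, BPR}, produces the required subanalytic, arc-analytic homeomorphism $h_1$, with $h_1(W)$ the germ at $0$ of the algebraic set cut out by $P_1,\dots,P_r$ evaluated at the coefficient vector $c'$; this set is defined over $\ovl\Q\cap\K$ and Whitney equisingular with $(W,0)$, hence with $(V,0)$. The main obstacle I anticipate is justifying that the Zariski equisingularity condition is genuinely algebraic in the coefficients: one must trace the iterated discriminant construction of \cite{BPR} and verify that the conditions ensuring equisingular persistence of the stratification under coefficient variation are expressible as polynomial equalities and nonvanishing conditions in $c'$, so that density of $\ovl\Q\cap\K$ in the resulting constructible subset of $\K^N$ applies.
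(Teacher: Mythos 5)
There is a genuine gap at the heart of your second stage, and it is exactly the issue that the main technical input of the paper, Theorem~\ref{thm_rat} (and its nested version Theorem~\ref{nest_ploski}), is designed to overcome. The set $Z$ of parameters $c'$ for which Zariski equisingularity persists along the segment from $c$ to $c'$ is \emph{not} Zariski open at $c$, and in general is not even constructible with $c$ in its interior. The reason is that the discriminant tower of Mostowski--BPR imposes, at every level, not only \emph{nonvanishing} conditions (for the units $u_i$ and the chosen discriminant $\Delta_{i+1,j_{i+1}}$) but also \emph{identical vanishing} conditions, namely $\Delta_{i+1,k}(a_i)\equiv 0$ for all $k<j_{i+1}$ (equations \eqref{disci} of the paper). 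These are proper Zariski-closed constraints; a generic perturbation of the coefficients destroys them, so the indices $j_i$ jump and the equisingular stratification changes. Thus density of $\ovl\Q$ in $\K$ gives you nothing for free: you need to know that the $\ovl\Q$-points you want actually lie inside the correct stratum, not merely near $c$ in the ambient coefficient space. A second, related obstruction is dimensional: after the Weierstrass preparations the data $a_{i,j}$, $u_i$ appearing in the tower are convergent power series, not finitely many numbers, so the relevant ``coefficient space'' is infinite-dimensional, and your finite $\K^N$ (the coefficients of $P_1,\dots,P_r$) does not parametrize the tower in an algebraic way.

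The paper gets around both issues at once. Instead of perturbing the final polynomial generators, it first expresses the entire solution cascade of the system \eqref{polyf_i}--\eqref{disci} (the $a_i$ and $u_i$) via the Nested Artin--P\l oski--Popescu theorem as algebraic power series in $x$ and auxiliary variables $z$, and then applies Theorem~\ref{thm_rat}: because the defining system \eqref{polyf_i}--\eqref{disci} has coefficients in $\ovl\Q$, any algebraic-power-series solution over $\C$ is actually the specialization at a point $\tt\in\C^r$ of a \emph{family of solutions} $a_i(t,x^i,z)$, $u_i(t,x^i,z)$ with coefficients in $\ovl\Q$, defined for $t$ in a full neighbourhood of $\tt$. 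The crucial point is that this family consists entirely of solutions of the whole system, including the identical-vanishing equations \eqref{disci}, for \emph{all} $t$; the proof exploits that $\tt_1,\dots,\tt_r$ can be taken to be a transcendence basis over $\ovl\Q$ of the field generated by the coefficients, so that any algebraic relation holding at $\tt$ holds identically. Only then does density of $\ovl\Q$ enter, to choose $\q\in\ovl\Q^r$ near $\tt$ with $u_i(\q,0,0)\neq 0$; the Parusi\'nski--P\u{a}unescu theorem is then applied to the one-parameter family interpolating $(\q,0)$ and $(\tt,z(x))$. Your proposed reduction to the algebraic case over $\K$ via \cite{BPR} in the first step is harmless, but to make the second step work you would still need the content of Theorem~\ref{thm_rat} to produce an algebraic family of \emph{solutions} of the discriminant tower, rather than an open condition in a naive coefficient space.
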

 
The proof of our main result is based on the approach introduced in \cite{Mo} and extended in \cite{BPR}. For instance the idea to prove Theorem \ref{thm1} in the case where $(V,0)$ is an hypersurface germ is to use a version of the Nested Artin-P\l oski-Popescu Approximation Theorem, that we prove in this paper (see Theorem \ref{nest_ploski}),  in order to construct a regular Zariski equisingular deformation of $(V,0)$ such that one of the fibers is the germ of a Nash hypersurface defined over $\ovl\Q$. By a refinement of a theorem of Varchenko \cite{Va} due to  A. Parusi\'nski and L. P\u{a}unescu \cite{PP} such a deformation is a Whitney equisingular deformation so it is topologically trivial and the trivialization is subanalytic and arc-analytic. Then we use the Artin-Mazur Theorem to transform our germ of a Nash set into the germ of an algebraic set (still defined over $\ovl\Q$) by a local diffeomorphism. For Theorem \ref{thm2} the idea is to apply essentially the same procedure to the graph of $g$, and the main difference concerns the part where we transform a Nash function germ into an algebraic function germ since Artin-Mazur Theorem is not sufficient to do this transformation. This part requires the construction of a particular deformation of the Nash set germ which is topologically trivial thanks to Thom-Mather Isotopy Lemma.\\
The paper is organized as follows: the first  and main part is devoted to give an algebraic statement concerning algebraic power series with complex coefficients solutions of algebraic equations with coefficients in $\ovl\Q$. It shows that such solutions are $\C$-points of a family of algebraic solutions defined over $\ovl\Q$ (see Theorem \ref{thm_rat}). In the next parts we apply this statement to prove Theorem \ref{thm1} and then Theorem \ref{thm2}, essentially by proving that the approach used in \cite{BPR} remains valid in our situation.

\begin{rem}
Let us mention that B. Teissier provided in \cite{Te} an example of the germ of a complex algebraic surface in $(\C^3,0)$ defined by a polynomial equation with coefficients in $\Q[\sqrt{5}]$ which is not Whitney equisingular to the germ of an algebraic set defined over $\Q$. So we cannot replace $\ovl\Q$ by $\Q$ in the statement of Theorem \ref{thm1}.
\end{rem}

\begin{rem}
It is known that the germ of an analytic set is not always diffeomorphic to the germ of an algebraic set (see \cite{Wh}). \\
Let us mention that in general the germ of an algebraic set is neither diffeomorphic to the germ of an algebraic set defined over $\ovl\Q$. For instance let us consider the germ of the curve $(V,0)\subset (\R^2,0)$  defined by the equation
$$xy(x-y)(x-\xi y)$$
where $\xi\in\R$ is a transcendental number.
Indeed $(V,0)$ is the union of four lines whose  cross-ratio is $\xi$. If $(V,0)$ were diffeomorphic to the germ of an algebraic set  $(W,0)$ defined over $\ovl\Q$, the differential of the diffeomorphism germ would induce a bijective linear map between the tangent spaces at 0 of $V$ and $W$. Such a linear map preserves the cross-ratio, so the tangent space of $(W,0)$ at 0 would be the union of four lines whose cross-ratio is equal to $\xi$ and this would not be possible since $(W,0)$ would be defined by algebraic equations with coefficients in $\ovl\Q$.\\
This example extends to the case where $\K=\C$ similarly as done in Example 6.2 \cite{BPR}.

\end{rem}

\begin{rem}
In general for a given analytic map germ $g:(\K^n,0)\lgw (\K^m,0)$ there is no germ of a homeomorphism $h:(\K^n,0)\lgw (\K^n,0)$ such that $g\circ h$ is the germ of a polynomial map: just take $g:(\K,0)\lgw (\K^2,0)$ given by $g(x)=(x,e^x)$ (see Example 6.3 \cite{BPR}). In particular Theorem \ref{thm2} cannot be extended to analytic map germs $(\K^n,0)\lgw (\K^m,0)$.\\
But in general, even if $g:(\K^n,0)\lgw (\K^m,0)$ is the germ of a polynomial map there is no germ of a homeomorphism $h:(\K^n,0)\lgw (\K^n,0)$ such that $g\circ h$ is the germ of a polynomial map defined over $\ovl\Q$. Indeed let $g:(\C,0)\lgw (\C^2,0)$ be defined by $g(x)=(x,\xi x)$ where $\xi\in\C$ is a transcendental number. If there were a homeomorphism germ $h:(\C,0)\lgw (\C,0)$ such that $g\circ h$ is the germ of a polynomial map defined over $\ovl\Q$ then both $h(x)$ and $\xi h(x)$  would  be algebraic over $\ovl\Q[x]$. But this would imply that $\xi$ is algebraic over $\Q$ which is not possible.
\end{rem}

\begin{rem}
In Theorem \ref{thm2} we do not know if the germ of a homeomorphism can be chosen to be arc-analytic or subanalytic. Indeed the proof of this result goes as follows: first we construct a Zariski equisingular deformation of the graphs of $g$ and of the function germs defining $(V,0)$ with the graphs of a Nash function germ $\wdt g$ and of function germs defining the germ of a Nash set $(\wdt V,0)$. Using Artin-Mazur Theorem we can reduce the situation to the case where $(\wdt V,0)$ is the germ of an algebraic set and $\wdt g$ is a unit $u$ times a polynomial function germ $ P$. Then, in \cite{BPR} is constructed a Thom stratification of the deformation
$$ (t,x)\lgw (1-t)u(0) P(x)+tu(x)P(x),$$
which shows (by Thom-Mather Isotopy Lemma) that the function germ $\wdt g=uP$ is homeomorphic to the function germ $P$. But Thom-Mather Isotopy Lemma does not provide an arc-analytic or subanalytic homeomorphism in general.
\end{rem}

\noindent\textbf{Notation and terminology} We will denote by $x$ and $y$ the vectors of indeterminates $(x_1,\ldots,x_n)$ and $(y_1,\ldots,y_m)$. The notation $x^i$ denotes the vector of indeterminates $(x_1,\ldots,x_i)$ for any $i\leq n$. When $\K=\R$ or $\C$, we denote by $\K\{x\}$ the ring of convergent power series with coefficients in $\K$, and by $\K\lg x\rg$ the ring of algebraic power series with coefficients in $\K$. This means that $\K\lg x\rg$ is the subring of $\K[[x]]$ whose elements are algebraic over $\K[x]$. We have that $\K\lg x\rg\subset \K\{x\}$, i.e. every algebraic power series is convergent.\\
\\
Let $\K=\C$ or $\mathbb{R}$. Let $\Omega$ be an open subset of
$\K^n$ and let $f$ be an analytic function on $\Omega$. We say that $f$ is a \emph{Nash
function} at $p\in\Omega$ if its Taylor expansion at $p$ is an algebraic power series.  An 
analytic function on $\Omega$ is a Nash function if it is a Nash function at every point
of $\Omega$. An analytic mapping $\varphi:\Omega\to\K^N$ is a \emph{Nash mapping} if all
its components are Nash functions on $\Omega$.\\
A subset $X$ of $\Omega$ is called a \emph{Nash subset} of $\Omega$ if for every
$p\in\Omega$ there exist an open neighborhood $U$ of $p$ in $\Omega$ and Nash
functions $f_1,\dots,f_s$ on $U$, such that $X\cap U=\{z\in U  \mid     f_1(z)=\cdots=f_s(z)=0\}$. A
germ $X_p$ of a set $X$ at $p\in\Omega$ is a \emph{Nash germ} if there exists an open
neighborhood $U$ of $p$ in $\Omega$ such that $X\cap U$ is a Nash subset of $U$. \\
A Nash function germ  is said to be defined over $\ovl\Q\cap \K$ if it satisfies a nontrivial polynomial equation with coefficients in $\ovl\Q\cap\K$. This is equivalent to say that its Taylor expansion at a $\ovl\Q\cap\K$-point is an algebraic power series whose coefficients are in $\ovl\Q\cap\K$, i.e. an element of $(\ovl\Q\cap\K)[[x]]$. A Nash set is said to be defined over $\ovl\Q\cap \K$ if it is locally defined by Nash function germs defined over $\ovl\Q\cap\K$.

\noindent\textbf{Acknowledgements.} This work originates from discussions with Adam Parusi\'nski. I wish to thank him warmly  for the many fruitful discussions we had on this problem and  his helpful remarks and valuable suggestions concerning the earlier versions of this paper. I also thank the referee for their useful and suitable comments and remarks.

%%%%%%%%%%%%%%%%%%%%%%%%%%%%%%%%%%%

\section{An approximation result}
We begin by stating the main result of this part:
\begin{thm}\label{thm_rat}
Let $f(x,y)\in \ovl\Q \lg x\rg[y]^p$ and let us consider a solution $y(x)\in\C \lg x\rg^m$ of $$f(x,y(x))=0.$$
Then there exist a new set of indeterminates $t=(t_1,\ldots,t_r)$, a vector of algebraic power series 
$$y(t,x)=\sum_{\a\in\N^n}y_\a(t)x^\a\in\ovl\Q\lg t,x\rg^m$$
and $\tt=(\tt_1,\ldots,\tt_r)\in\C^r$ belonging to the domain of convergence of all the $y_\a(t)$ such that
$$y(x)=y(\tt,x)
\text{ and } f(x,y(t,x))=0.$$

\end{thm}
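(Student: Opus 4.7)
My plan is to realize $y(x)$ as the specialization of an algebraic family defined over $\overline{\Q}$, by combining a finiteness reduction for the data determining $y(x)$ with the Nested Artin-P\l oski-Popescu Approximation Theorem (Theorem \ref{nest_ploski}) established in this paper.

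I would first reduce to a finitely generated situation. Since each component $y_i(x) \in \C\langle x\rangle$ is algebraic over $\C[x]$, it satisfies a polynomial $Q_i(x, T) \in \C[x][T]$. The finitely many coefficients of the $Q_i$, together with finitely many initial Taylor coefficients of the $y_i$ chosen so as to single out the correct Hensel branch (possibly after a generic linear change of the $x$-variables to expose a regular direction), form a tuple $\mathbf{c} = (c_1, \ldots, c_r) \in \C^r$ such that every Taylor coefficient of $y(x)$ lies in $\overline{\Q}[\mathbf{c}] \subset \C$. Using the density of $\overline{\Q}$ in $\C$ to replace each $c_j$ by $c_j - a_j$ for $a_j \in \overline{\Q}$ arbitrarily close to $c_j$, I may further assume $\mathbf{c}$ lies arbitrarily close to $0 \in \C^r$, so that in the end $\mathbf{c}$ will belong to the polydisc of convergence of any algebraic power series in $\overline{\Q}\langle t\rangle$ produced below.

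Next, let $t = (t_1, \ldots, t_r)$ be fresh indeterminates and let $\widetilde{Q}_i(t, x, T) \in \overline{\Q}[t, x, T]$ be the polynomial obtained from $Q_i$ by substituting $t_j$ for $c_j$. Consider the enlarged system
$$
F(t, x, y) = \bigl( f(x, y),\ \widetilde{Q}_1(t, x, y_1), \ldots, \widetilde{Q}_m(t, x, y_m) \bigr) \in \overline{\Q}\langle t, x\rangle[y]^{p+m}.
$$
At $t = \mathbf{c}$ the system $F(\mathbf{c}, x, y) = 0$ admits $y(x)$ as a solution, and the equations $\widetilde{Q}_i$ together with the initial jets encoded in $\mathbf{c}$ are designed to rigidify this solution along the fiber $t = \mathbf{c}$. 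Applying Theorem \ref{nest_ploski} to $F$ with $t$ as the block of parameters and the coefficients of $y$ as the nested unknowns then produces $y(t, x) \in \overline{\Q}\langle t, x\rangle^m$ satisfying $F(t, x, y(t, x)) = 0$ identically, and in particular $f(x, y(t, x)) = 0$, with $y(\mathbf{c}, x) = y(x)$. Setting $\mathbf{t} = \mathbf{c}$ completes the argument.

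The central obstacle is the descent from $\C$-coefficients to $\overline{\Q}$-coefficients: a direct P\l oski parametrization applied within $\C\langle x\rangle$ would only yield a family defined over $\C$. The enlarged system $F$, which carries $\overline{\Q}$-generic polynomial information about $y(x)$, is the essential device forcing the parametrizing family to be defined over $\overline{\Q}$. The most delicate point, and the reason the \emph{nested} rather than the ordinary Artin-P\l oski-Popescu theorem is required, is arranging the Hensel-type rigidity along the fiber $t = \mathbf{c}$ so that the auxiliary equations $\widetilde{Q}_i$ really do pin down the solution branch uniquely; this rigidity is also what lets $\mathbf{c}$ be placed inside the convergence domain of the resulting family after the preliminary shift.
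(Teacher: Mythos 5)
Your approach has a fatal circularity: you invoke Theorem~\ref{nest_ploski} to prove Theorem~\ref{thm_rat}, but in this paper Theorem~\ref{nest_ploski} is deduced \emph{from} Theorem~\ref{thm_rat} (its proof first applies the nested P\l oski theorem of \cite{BPR}, which produces a parametrization over $\C$, and then applies Theorem~\ref{thm_rat} to descend that parametrization to $\ovl\Q$). So Theorem~\ref{nest_ploski} is not available as a tool here. And even if one substituted the version of nested P\l oski over $\C$ from \cite{BPR}, the output would only give coefficients in $\C\lg t,x\rg$, not $\ovl\Q\lg t,x\rg$; the whole content of Theorem~\ref{thm_rat} is precisely this descent from $\C$ to $\ovl\Q$, and your enlarged system $F(t,x,y)$ does not by itself produce it. A P\l oski-type theorem over $\ovl\Q$ parametrizes solutions over $\ovl\Q$ and its algebraic extensions, but it does not automatically recover, at a particular $\C$-point $\tt=\mathbf{c}$, the specific transcendental solution $y(x)$ you started with -- that recovery is exactly what must be proved.

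There is also a genuine gap in the claim that every Taylor coefficient of $y(x)$ lies in $\ovl\Q[\mathbf{c}]$ after adjoining the coefficients of the minimal polynomials $Q_i$ and finitely many initial jets. Hensel-type recursion introduces denominators (powers of the derivative of $Q_i$ at the chosen branch), so the natural statement is that the coefficients lie in the \emph{fraction field} $\ovl\Q(\mathbf{c})$, and controlling those denominators uniformly in the multi-index $\a$ requires an Eisenstein-type bound. This is exactly what the paper's Lemma~\ref{eis} provides: the coefficients are of the form $S_\a(\tt)/R(\tt)^{|\a|}$ with a single $R$. Without it, the substitution $c_j \mapsto t_j$ does not yield a well-defined vector in $\ovl\Q\lg t, x\rg^m$, and the ``shift $\mathbf{c}$ close to $0$'' step has nothing to hang onto. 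The paper's actual route is quite different from yours: Lemma~\ref{lem_alg} first writes $y(x)$ as $y'(\tt,\uu,\vv,x)$ with $y'$ over $\ovl\Q$ and a structured parameter triple (purely transcendental $\tt$, a rational function value $\uu=1/R(\tt)$, a finite element $\vv$); then the $\ovl\Q$-descent is achieved analytically by replacing $\uu$ and $\vv$ by Taylor expansions $\phi_1,\phi_2\in\ovl\Q\lg t\rg$ of $1/R$ and of a root branch of the minimal polynomial of $\vv$ at a nearby $\ovl\Q$-point, off the discriminant locus; finally, the identity $f(x,y'(t,\phi_1(t),\phi_2(t),x))\equiv 0$ is forced by algebraic independence of $\tt$ over $\ovl\Q$. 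None of these ingredients appears in your outline.
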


\begin{rem}
This theorem is not true if we replace $\ovl\Q$ by $\Q$. For instance let $x$ and $y$ be single indeterminates and set $f=y^2-2x^2$. Then there is no algebraic power series $y(x,t)\in\Q\lg x,t\rg$ such that 
$$y(x,t)^2-2x^2=0$$
but
we have
$$f(x,\sqrt{2}x)=0.$$
\end{rem}

\begin{proof}[Proof of Theorem \ref{thm_rat}]
If $y(x)\in \ovl\Q\lg x\rg^m$ then we take $r=0$ and there is nothing to prove.\\
Let us assume that $y(x)\in\C \lg x\rg^m\backslash \ovl\Q\lg x\rg^m$.
By Lemma \ref{lem_alg} given below we may assume that there exist $y'(t,u,v,x )\in\ovl\Q\lg t,u,v,x \rg^m\cap\ovl\Q[t,u,v][[x]]^m$, where $t=(t_1,\ldots, t_{r})$ and $u$ and $v$ are single indeterminates, and $\tt\in\C^{r}$, $\uu\in\C$, $\vv\in\C$ such that
\begin{equation}y(x )=y'(\tt,\uu,\vv,x ).\end{equation}
Moreover we may assume that $\tt_1$,\ldots, $\tt_{r}$ are algebraically independent over $\ovl\Q$, $\uu=\frac{1}{R(\tt_1,\ldots,\tt_{r})}$ for some polynomial $R\in\ovl\Q[t_1,\ldots,t_r]$ such that $R(\tt_1,\ldots,\tt_r)\neq 0$, and $\vv$ is finite over $\L:=\ovl\Q(\tt_1,\ldots,\tt_{r})$.\\
Let $P(t_1,\ldots,t_{r},v)\in\ovl\Q(t)[v]$ be the monic polynomial of minimal degree in $v$ such that
$$P(\tt_1,\ldots,\tt_{r},\vv)=0.$$
%Then $P(t,v)$ is irreducible in $\ovl\Q(t)[v]$. \\
Let $D\subset \C^{r}$ be  the discriminant locus of $P(t,v)$ seen as a polynomial in $v$ (i.e. $D$ is the locus of points $q\in\C^{r}$ such that $q$ is a pole of one of the coefficients of $P$ or such that $P(q,v)$ has at least one multiple root). Since $P(\tt_1,\ldots,\tt_{r},v)$ has no multiple roots in  an algebraic closure of $\L$, the point $\tt$ is not in $D$.  Then there exist   $\mathcal U\subset \C^{r}\backslash D$  a simply connected open neighborhood of $p$ and analytic functions
$$w_i : \mathcal U\lgw \C, \ \ i=1,\ldots, d$$
such that
$$P(t,v)=\prod_{i=1}^d(v-w_i(t))$$
and $w_1(\tt_1,\ldots,\tt_{r})=\vv$.\\
Moreover  the $t\lgm w_i(t)$ are algebraic functions over $\ovl\Q[t]$. In particular the Taylor series of $w_1$ at a point of $\mathcal U\cap\ovl\Q^{r}$ is an algebraic power series with algebraic coefficients.\\
Since the polynomial $R$ is not vanishing at $p$  the function 
$$t\in\C^{r}\backslash\{R=0\}\lgm \frac{1}{R(t)}$$ is also an analytic function which is algebraic  over $\ovl\Q[t]$ and so its Taylor series at a point of $\ovl\Q^{r}\backslash\{R=0\}$ is an algebraic power series with algebraic coefficients.\\
Let $q:=(q_1,\ldots,q_{r})\in \ovl\Q^{r}\cap\mathcal U\backslash\{R=0\}$ such that $\tt$ belongs to an open polydisc  $\Delta$ centered at $q$ and such that $\Delta\subset \mathcal U\backslash\{R=0\}$.  We denote by $\phi_1(t)$ and $\phi_2(t)\in\ovl\Q\lg t\rg$ the Taylor series of  $t\lgm \frac{1}{R(t)}$  and $w_1$ at $q$. For simplicity we can make a translation and assume that $q$ is the origin of $\C^{r}$. In particular the series $\phi_1(t)$ and $\phi_2(t)$ are convergent at $\tt$.\\
We have that
$$f(x,y'(\tt_1,\ldots,\tt_{r},\uu,\vv,x ))=0$$
or equivalently
$$f(x,y'(\tt_1,\ldots,\tt_{r},\phi_1(\tt_1,\ldots,\tt_{r}),\phi_2(\tt_1,\ldots,\tt_{r}),x ))=0.$$
The function
$$(t,x )\lgm F(t,x):=f(x,y'(t_1,\ldots,t_{r},\phi_1(t_1,\ldots,t_{r}),\phi_2(t_1,\ldots,t_{r}),x ))$$ is an algebraic function over $\ovl \Q[t,x ]$.  So if $F(t,x)\not\equiv 0$ there exists an algebraic function $(t,x)\lgm g(t,x)$ such that
$$(t,x )\lgm g(t,x)F(t,x)$$
is a nonzero polynomial function. Indeed if 
$$a_0(t,x)T^e+a_1(t,x)T^{e-1}+\cdots+a_e(t,x)$$
is a polynomial of minimal degree having $F(t,x)$ as a root then $a_e(t,x)\not\equiv 0$ and we can choose
$$g(t,x):=-a_0(t,x) F(t,x)^{e-1}-a_1(t,x)F(t,x)^{e-2}+\cdots-a_{e-1}(t,x)$$
so we have that
$$g(t,x)F(t,x)=a_e(t,x).$$
Since   $F(\tt,x)=0$ we have that $a_e(\tt,x)=0$ but $\tt_1$, \ldots, $\tt_{r}$, $x$ being algebraically independent over $\ovl\Q$ we obtain that $a_e(t,x)\equiv 0$ which is a contradiction. Thus  we have that
$$F(t,x)=f\big(x,y'(t_1,\ldots,t_{r},\phi_1(t_1,\ldots,t_{r}),\phi_2(t_1,\ldots,t_{r}),x )\big)=0.$$
This proves the theorem by defining
$$y(t,x)=y'(t,\phi_1(t),\phi_2(t),x).$$
Since $\phi_1(t)$ and $\phi_2(t)$ are convergent power series at $\tt$ and since $y'(t,u,v,x )\in\ovl\Q\lg t,u,v,x \rg^m\cap\ovl\Q[t,u,v][[x]]^m$ then all the series $y_\a(t)$ are convergent at $\tt$.

\end{proof}

\begin{rem}
Let us assume that $f(x,y)\in\Q\lg x\rg[y]^p$. In the proof of Theorem \ref{thm_rat} let us assume that $r=0$, i.e. the coefficients of $y(x)$ belong to a finite field extension of $\Q$. In this case the analytic function $w_1$ is a constant function whose value is in $\ovl\Q\backslash \Q$. This is why  we need to work with the algebraically closed field $\ovl\Q$ and not only with $\Q$.

\end{rem}

%%%%%%%%%%%%%%%%%%%%%%%%%%%%%%%%%%%%%%%%%%%%%%%%
\begin{lem}\label{lem_alg}
Let $f\in\C\langle x\rangle^m\backslash\ovl\Q\lg x\rg^m$. Then there exist complex numbers $\tt_1$, \ldots, $\tt_r$, $\uu$ and $\vv$ with $r\geq 1$ and  $F\in\ovl\Q\lg t,u,v,x\rg^m$, where $t=(t_1,\ldots, t_r)$ and $u$ and $v$ are single indeterminates, such that
\begin{itemize}
\item $F\in\ovl\Q[t,u,v][[x]]^m$,
\item $f(x)=F(\tt_1,\ldots,\tt_r,\uu,\vv,x),$
\item the extension $\ovl\Q\lgw \ovl\Q(\tt_1,\ldots,\tt_{r})$ is purely transcendental, 
\item $\uu=\frac{1}{R(\tt_1,\ldots,\tt_{r})}$ for some polynomial $R\in\ovl\Q[t_1,\ldots,t_r]$ with $R(\tt_1,\ldots,\tt_r)\neq 0$,
\item  $\vv$ is finite over $\ovl\Q(\tt_1,\ldots,\tt_r)$.
\end{itemize}

\end{lem}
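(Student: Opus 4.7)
My strategy is to identify a finitely generated subfield of $\C$ over $\ovl\Q$ containing all Taylor coefficients of $f$, present it via a transcendence basis and a primitive element, and then construct $F$ by a universal implicit function / Hensel construction.

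First, let $K'$ be the subfield of $\C$ generated over $\ovl\Q$ by all Taylor coefficients of the components $f_1,\ldots,f_m$. I would argue $K'$ is finitely generated over $\ovl\Q$ as follows: each $f_j$ is algebraic over $\C[x]$, so satisfies an irreducible equation $P_j(x,f_j)=0$ with $P_j\in\C[x,z]$; after a suitable Weierstrass preparation or generic linear change of the variable $z$ one may assume $P_j$ is monic in $z$ with $\partial P_j/\partial z(0,f_j(0))\neq 0$. The implicit function theorem then expresses every Taylor coefficient of $f_j$ as a polynomial in the coefficients of $P_j$, in $f_j(0)$, and in $(\partial P_j/\partial z(0,f_j(0)))^{-1}$, so $K'$ is contained in the finitely generated extension these elements generate. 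Since $f\notin\ovl\Q\lg x\rg^m$ and $\ovl\Q$ is algebraically closed, $r:=\td_{\ovl\Q}K'\geq 1$. Choose a transcendence basis $\tt_1,\ldots,\tt_r$ of $K'$ over $\ovl\Q$ and (by the primitive element theorem) $\vv\in K'$ with $K'=\ovl\Q(\tt)[\vv]$. Writing each of the finitely many generators uniquely in the $\ovl\Q(\tt)$-basis $\{1,\vv,\ldots,\vv^{d-1}\}$ produces only finitely many denominators, all in $\ovl\Q[t]$; let $R\in\ovl\Q[t]$ be their common multiple and set $\uu=1/R(\tt)$. Every generator then lies in $\ovl\Q[\tt,\uu,\vv]$, giving the required structural properties of $\tt,\uu,\vv$.

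To construct $F$, I pass to a universal setting. Fix polynomial lifts $\tilde a^j,\tilde c_{0,j},\tilde w_j\in\ovl\Q[t,u,v]$ of the coefficients of $P_j$, of $f_j(0)$, and of $(\partial P_j/\partial z(0,f_j(0)))^{-1}$. Introduce generic indeterminates $T=(T_{i,k}^j),C_{0,j},W_j$ and the generic polynomial $\widetilde P_j^{\mathrm{univ}}(T,x,z):=\sum T_{i,k}^jx^iz^k$, with the imposed relations $\widetilde P_j^{\mathrm{univ}}(T,0,C_{0,j})=0$ and $W_j\cdot \partial\widetilde P_j^{\mathrm{univ}}/\partial z(T,0,C_{0,j})=1$. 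Hensel's lemma yields a unique algebraic power series $F_j^{\mathrm{univ}}(T,C_{0,j},W_j,x)$ with initial value $C_{0,j}$ solving $\widetilde P_j^{\mathrm{univ}}=0$, and a standard induction on the order of implicit differentiation shows that its $x$-Taylor coefficients are polynomials in $T,C_{0,j},W_j$ over $\Q$, the only potential denominators (powers of $\partial\widetilde P_j^{\mathrm{univ}}/\partial z(T,0,C_{0,j})$) being absorbed into powers of $W_j$. Substituting $T\leftarrow\tilde a^j(t,u,v)$, $C_{0,j}\leftarrow\tilde c_{0,j}(t,u,v)$, $W_j\leftarrow\tilde w_j(t,u,v)$ produces
$$F_j(t,u,v,x):=F_j^{\mathrm{univ}}\bigl(\tilde a^j(t,u,v),\tilde c_{0,j}(t,u,v),\tilde w_j(t,u,v),x\bigr)\in\ovl\Q[t,u,v][[x]]^m,$$
algebraic over $\ovl\Q[t,u,v,x]$ since algebraicity is preserved by polynomial substitution. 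The specialization $F_j(\tt,\uu,\vv,x)$ is the Hensel solution of $P_j(x,z)=0$ with initial value $f_j(0)$, hence $f_j(x)$ itself.

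\textbf{The hardest step} will be making rigorous the algebraicity of $F_j^{\mathrm{univ}}$ over $\Q[T,C_{0,j},W_j,x]$. The universal solution satisfies $\widetilde P_j^{\mathrm{univ}}(T,x,F_j^{\mathrm{univ}})=0$ identically only after imposing $W_jD_j=1$ with $D_j=\partial\widetilde P_j^{\mathrm{univ}}/\partial z(T,0,C_{0,j})$; viewed as a power series in $\Q[T,C_{0,j},W_j][[x]]$ this identity holds only modulo the ideal $(W_jD_j-1,\widetilde P_j^{\mathrm{univ}}(T,0,C_{0,j}))$. Extracting from this a genuine polynomial equation for $F_j^{\mathrm{univ}}$ over $\Q[T,C_{0,j},W_j,x]$, and hence for $F_j$ over $\ovl\Q[t,u,v,x]$ after substitution, requires reducing the lifted defining polynomial modulo these explicit relations — a finite-step elimination that, as the worked example $f(x)=\sqrt{\pi+x}$ with $F=\vv\sqrt{1+\uu x}$ satisfying $z^2-v^2-v^2ux=0$ suggests, yields an algebraic equation but is the technical heart of the argument.
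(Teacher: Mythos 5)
The field-theoretic preamble of your proposal — finitely generated coefficient field $K'$, transcendence basis $\tt_1,\ldots,\tt_r$, primitive element $\vv$, common denominator $R$ with $\uu=1/R(\tt)$, and $r\ge 1$ because $K'\ne\ovl\Q$ — matches the paper's setup (which cites Cutkosky--Kashcheyeva for the finite generation). One caveat: ``a generic linear change of the variable $z$'' does \emph{not} guarantee $\partial P_j/\partial z(0,f_j(0))\ne 0$; that vanishing order is intrinsic to the singularity of the curve $\{P_j=0\}$ at $(0,f_j(0))$, and killing it requires an Eisenstein-type substitution $y=u^{e+1}y'+(\text{jet of }f_j)$, exactly as in the paper's Lemma 1.5. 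This is a repairable inaccuracy, not the main problem.

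The real gap is the one you flag yourself and do not close: proving that $F_j(t,u,v,x)$ lies in $\ovl\Q\lg t,u,v,x\rg$. Your universal Hensel solution $F_j^{\mathrm{univ}}$ satisfies $\widetilde P_j^{\mathrm{univ}}(T,x,F_j^{\mathrm{univ}})=0$ only \emph{modulo} the ideal generated by $\widetilde P_j^{\mathrm{univ}}(T,0,C_{0,j})$ and $W_jD_j-1$, and after substituting polynomial lifts $\tilde a^j,\tilde c_{0,j},\tilde w_j$ these relations hold at $(\tt,\uu,\vv)$ but \emph{not} identically in $\ovl\Q[t,u,v]$ (for instance $\tilde w_j D_j(\tilde a^j,0,\tilde c_{0,j})=1$ would force $D_j$ to be a nonzero constant, since the only units of $\ovl\Q[t,u,v]$ are constants). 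So $F_j$ does not satisfy $P_j=0$ identically, and you have no equation at all. The obvious rescue — take a vanishing polynomial $Q$ of $F_j(\tt,\uu,\vv,x)$ over $\ovl\Q[\tt,\uu,\vv,x]$, expand $Q(t,u,v,x,F_j(t,u,v,x))=\sum_\b c_\b(t,u,v)x^\b$, and conclude $c_\b\equiv 0$ — fails because $(\tt,\uu,\vv)$ are \emph{not} algebraically independent: $R(t)u-1$ and the minimal polynomial of $\vv$ over $\ovl\Q(\tt)$ lie in the vanishing ideal. This is precisely why the paper does not lift $t,u,v$ on an equal footing.

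What the paper does instead, and what your route is missing, is to reduce to a situation where the algebraic-independence trick applies to $\tt$ \emph{alone}. First a Galois/Vandermonde argument writes each $f_i=\sum_{k=0}^{d-1}a^k f_{i,k}$ with $f_{i,k}\in\L\lg x\rg$, $\L=\ovl\Q(\tt)$; this peels off all dependence on $\vv=a$ as an honest polynomial in $v$. Then Eisenstein's lemma gives $f_{i,k}=\sum_\a S_{i,k,\a}(\tt)R(\tt)^{-|\a|}x^\a$ with $S_{i,k,\a},R\in\ovl\Q[t]$, so that after the substitution $x\mapsto R(\tt)x$ the coefficients are \emph{polynomials} in $\tt$; this peels off all dependence on $\uu=1/R(\tt)$ as the substitution $x\mapsto ux$. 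Only now does one lift to $F_{i,k}(t,x)=\sum_\a S_{i,k,\a}(t)x^\a\in\ovl\Q[t][[x]]$, and algebraicity over $\ovl\Q[t,x]$ follows by the expand-and-kill argument because $\tt$ \emph{is} algebraically independent. Finally $F_i=\sum_k v^kF_{i,k}(t,ux)$ is obtained by polynomial substitutions, which preserve algebraicity, and lies in $\ovl\Q[t,u,v][[x]]\cap\ovl\Q\lg t,u,v,x\rg$. Your universal-Hensel construction bypasses both the Galois decomposition and Eisenstein's lemma, and as a result never reaches a position where any algebraic-independence argument can be run; the ``finite-step elimination'' you defer to is, in effect, these two lemmas.
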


\begin{proof}
Let $\K$ be the field extension of $\ovl\Q$ generated by the coefficients of the minimal polynomials of the components of $f$. Then the coefficients of the components of $f$ belong to a finite field extension of $\K$ (see for instance \cite{CK}). Let us replace $\K$ by this finite field extension. There exists a purely transcendental finitely generated field extension $\ovl \Q\lgw \L$ such that $\L\lgw\K$ is finite. By enlarging $\K$ we may assume that $\L\lgw \K$ is normal. By the primitive element theorem $\K=\L(a)$ for some $a\in\C$ algebraic over $\L$.\\
Let us write $f(x)=(f_1(x),\ldots,f_m(x))$. We can write, for $i=1,\ldots, m$:
$$f_i(x)=\sum_{k=0}^{d-1}a^kf_{i,k}(x)$$ 
where $d$ is the degree of $a$ over $\L$ and the $f_{i,k}(x)$ are power series with coefficients in $\L$. Let us denote by 
$$a_1=a,\ a_2,\ldots, a_d$$
the conjugates of $a$ over $\L$. Since $f_i(x)$ is algebraic over $\K[x]$ and $\L\lgw \K$ is an algebraic extension, we have that $f_i(x)$ is algebraic over $\L[x]$. Let $P_i(x,y)=\sum_{\a,l}p_{\a,l}x^\a y^l\in\L[x,y]$ be a nonzero vanishing polynomial of $f_i(x)$ and let $\si$ be a $\L$-automorphism of $\K$ such that $\si(a)=a_j$ for some $j$. It induces a $\L[[x]]$-automorphism of $\K[[x]]$ defined by $\si(\sum_\a c_\a x^\a)=\sum_\a\si(c_\a)x^\a$. Then we have that
$$0=\si(P_i(x,f_i(x))=\sum_{\a,l}\si(p_{\a,l})x^\a\si(f_i(x))^l)=\sum_{\a,l}\si(p_{\a,l})x^\a\left(\sum_{k=0}^{d-1}a_j^kf_{i,k}(x)\right)^l.$$
Thus for every $i=1,\ldots, m$ and $j=1,\ldots, d$  the power series
$$\sum_{k=0}^{d-1}a_j^kf_{i,k}(x)\in\K[[x]]$$ 
is algebraic over $\K[x]$. Let $M$ be the (non-singular) $d\times d$ Vandermonde matrix associated to the $a_j$. Then we have that
$$\wdt f_i(x)=M \ovl f_i(x)$$
where $\wdt f_i(x)$ is the vector whose entries are the $\sum_{k=0}^{d-1}a_j^kf_{i,k}(x)$, for $j=1$, \ldots, $d$, and $\ovl f_i(x)$ is the vector whose entries are the $f_{i,k}(x)$. Then $\ovl f_i(x)=M^{-1}\wdt f_i(x)$, thus the $f_{i,k}(x)$ are algebraic over $\K[x]$ and so over $\L[x]$. This shows that $f_{i,k}(x)\in\L\lg x\rg$ for every $i$ and $k$.\\

Let $\tt_1$, \ldots, $\tt_{r}$ be a  transcendence basis of $\L/\ovl\Q$. Then by Lemma \ref{eis} given below we have that:
$$f_{i,k}=\sum_{\a\in\N^n}\frac{S_{i,k,\a}(\tt_1,\ldots, \tt_{r })}{R_{i,k}(\tt_1,\ldots,\tt_{r })^{|\a|}}x^\a$$
for some polynomials $S_{i,k,\a}$ and $R_{i,k}\in\ovl\Q[t_1,\ldots, t_r]$.
By replacing each $R_{i,k}$ by $\prod_{j,l}R_{j,l}$ and multiplying every $S_{i,k,\a}$ by $\prod_{(j,l)\neq(i,k)}^{d-1}R_{j,l}^{|\a|}$ we may assume that $R_{i,k}=R_{i',k'}=R$ for every $(i,k)$ and $(i',k')$. \\
The power series
$$f^*_{i,k}(x)=f_{i,k}(R(\tt_1,\ldots,\tt_{r })x_1,\ldots, R(\tt_1,\ldots,\tt_{r })x_n)=\sum_{\a\in\N^n}S_{i,k,\a}(\tt_1,\ldots, \tt_{r }) x^\a$$
belongs to $\ovl\Q(\tt_1,\ldots,\tt_{r })\lg x\rg$ since $f_{i,k}(x)\in \ovl\Q(\tt_1,\ldots,\tt_{r })\lg x\rg$.\\
Thus we have that
$$f_{i,k}^*=F_{i,k}(\tt_1,\ldots,\tt_{r },x)$$
with 
$$F_{i,k}:=\sum_{\a\in\N^n}S_{k,\a}(t_1,\ldots, t_{r })x^\a\in\ovl\Q[t_1,\ldots, t_{r }][[x]]$$
for every $i$ and $k$  where the $t_i$ are new indeterminates.\\
 Moreover let $P_{i,k}(t_1,\ldots, t_{r },x,y)\in\ovl\Q[t_1,\ldots, t_{r },x,y]$, where $y$ is a new indeterminate, be a nonzero polynomial with $P_{i,k}(\tt,x,f_{i,k}^*(x))=0$. Since $F_{i,k}\in\ovl\Q[t_1,\ldots,t_{r }][[x]]$ for every $k$, we can write
 $$P_{i,k}(t_1,\ldots,t_{r },x,F_{i,k}(t_1,\ldots,t_{r },x))=\sum_{\b\in\N^n} P_{i,k,l}(t_1,\ldots,t_{r })x^\b$$
 for some polynomials $P_{i,k,\b}\in\ovl\Q[t_1,\ldots,t_{r }]$. Thus $P_{i,k,\b}(\tt_1,\ldots, \tt_{r })=0$ for every $i$, $k$ and $\b$, but since $\tt_1$, \ldots, $\tt_{r }$ are algebraically independent over $\ovl\Q$ we have that
 $$P_{i,k,\b}(t_1,\ldots, t_{r })=0$$ for every $i$, $k$ and $\b$ so
 $$P_{i,k}(t_1,\ldots,t_{r },x,F_{i,k}(t_1,\ldots,t_{r },x))=0$$
 and this implies that $F_{i,k}\in\ovl\Q\lg t_1,\ldots, t_{r },x\rg$. In particular if $u$ denotes a new indeterminate we have that
 $$F_{i,k}(t_1,\ldots,t_{r },ux_1,\ldots,ux_n)\in\ovl\Q\lg t_1,\ldots,t_r,u,x\rg\cap\ovl\Q[t_1,\ldots,t_r,u][[x]]\ \ \forall k.$$
 Finally we set $t=(t_1,\ldots,t_r)$ and 
 $$F_i(t,x)=\sum_{k=0}^{d-1}v^kF_{i,k}(t_1,\ldots,t_{r },ux_1,\ldots,ux_n)$$
 where $v$ denotes a new indeterminate. Thus
the result is proven  with $F$ the vector whose components are the $F_i$ and 
 $\uu=\frac{1}{R(\tt_1,\ldots,\tt_{r })}$ and $\vv=a$.
 \end{proof}
%%%%%%%%%%%%%%%%%%%%%%%%%%%%%%%%%%%%
The  following version of Eisenstein Lemma is essentially  Lemma 2.2 \cite{To2} and the proof is the same - but we give it here for the convenience of the reader:
\begin{lem}[Eisenstein Lemma]\label{eis}
Let $f\in\ovl \Q(\tt_1,\ldots, \tt_r)\lg x\rg$ be an algebraic power series where the $\tt_i\in\C$ are algebraically independent over $\ovl \Q$. Then there exist a polynomial $R(t)\in\ovl\Q[t]$ and polynomials $S_\a(t)\in \ovl\Q[t]$ for every $\a\in\N^n$, where $t=(t_1,\ldots, t_r)$ is a vector of new indeterminates, such that
$$f(x)=\sum_{\a\in\N^n}\frac{S_\a(\tt_1,\ldots, \tt_r)}{R(\tt_1,\ldots,\tt_r)^{|\a|}}x^\a.$$

\end{lem}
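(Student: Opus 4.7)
The plan is to follow the classical Eisenstein-type bound on the denominators of the Taylor coefficients of an algebraic power series, now with the base field $\ovl\Q(\tt_1,\ldots,\tt_r)$ (the $\tt_i$ being algebraically independent over $\ovl\Q$) playing the role of the usual number field; because of this algebraic independence, $\ovl\Q[\tt_1,\ldots,\tt_r]$ is a genuine polynomial ring, so the statement ``denominators are powers of $R(\tt)$'' is meaningful. Since $f$ is algebraic over $\ovl\Q(\tt)[x]$, it satisfies a nonzero polynomial relation that, after clearing denominators in $\tt$, reads $P(\tt,x,f(x))=0$ with $P\in\ovl\Q[\tt,x,y]$. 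Translating $y$ by $f(0)\in\ovl\Q(\tt)$ and clearing denominators again reduces to $f(0)=0$; using either a Weierstrass-type preparation in $y$ or the fact that $\ovl\Q(\tt)\lg x\rg$ is the henselization of $\ovl\Q(\tt)[x]_{(x)}$, one further reduces to the situation where $R(\tt):=P_y(\tt,0,0)\in\ovl\Q[\tt]$ is nonzero. In this case $f$ is the unique formal solution of $P(\tt,x,y)=0$ with $f(0)=0$ given by Hensel's lemma in $\ovl\Q(\tt)[[x]]$, and writing $f=\sum_\alpha a_\alpha(\tt)x^\alpha$, comparison of the coefficient of $x^\alpha$ in $P(\tt,x,f)=0$ yields, for $|\alpha|\geq 1$, a recursion
$$R(\tt)\,a_\alpha(\tt)=T_\alpha\bigl(\tt,(a_\beta)_{|\beta|<|\alpha|}\bigr),$$
where $T_\alpha$ is polynomial in $\tt$ and in the previously determined coefficients.

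The main step is then an induction on $|\alpha|$ establishing that $R(\tt)^{|\alpha|}a_\alpha(\tt)\in\ovl\Q[\tt]$, which yields the lemma with $S_\alpha:=R^{|\alpha|}a_\alpha$. The chief obstacle is the bookkeeping in this induction: the monomials of $P$ of the form $p_{0,j}(\tt)y^j$ with $j\geq 2$---pure powers of $y$ with no $x$-factor---contribute to $T_\alpha$ products $a_{\beta_1}\cdots a_{\beta_j}$ whose index sum equals $|\alpha|$, so matching the inductive denominator against the single $R$ on the left-hand side naively loses one power of $R$ at each level and prevents the induction from closing. The standard remedy is a preliminary rescaling $y\mapsto R(\tt)y$, which multiplies each such coefficient $p_{0,j}$ by $R^j$---divisible by $R^2$ whenever $j\geq 2$---and thereby restores the missing factor. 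After this rescaling the induction closes cleanly, and the statement in the required form is recovered by replacing $R$ with a small fixed power of itself (absorbed back into $R$) and, if needed, by clearing the denominator of $f(0)$ to cover the $\alpha=0$ term.
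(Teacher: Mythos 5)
There is a genuine gap at the step where you pass to a Henselian equation. After translating so that $f(0)=0$, you claim that ``one further reduces to the situation where $R(\tt):=P_y(\tt,0,0)$ is nonzero,'' citing Weierstrass preparation in $y$ or the fact that $\ovl\Q(\tt)\lg x\rg$ is the henselization of $\ovl\Q(\tt)[x]_{(x)}$. Neither tool gives this, and in general no such reduction is possible for a \emph{scalar} equation satisfied by $f$ itself. Indeed, already with $r=0$, take $P(x,y)=y^3-3xy-x^2\in\ovl\Q[x,y]$; it is irreducible and primitive over $\ovl\Q[x]$, has a unique power-series root $f\in\ovl\Q\lg x\rg$ with $f(0)=0$ and $f'(0)=-1/3$, and $P_y(0,0)=0$. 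By Gauss's lemma any $Q\in\ovl\Q[x,y]$ with $Q(x,f)=0$ factors as $Q=PS$ with $S\in\ovl\Q[x,y]$, whence $Q_y(0,0)=P_y(0,0)\,S(0,0)+P(0,0)\,S_y(0,0)=0$: \emph{no} vanishing polynomial for $f$ is Henselian at the origin. Weierstrass preparation only produces a distinguished polynomial of degree equal to the multiplicity of $0$ in $P(0,y)$, which is still non-Henselian here; and the henselization characterization merely says $f$ lies in some standard \'etale extension $(\ovl\Q(\tt)[x]_{(x)}[T]/(Q))_g$, i.e.\ $f$ is a rational expression in a \emph{different} series $\tau$ that satisfies a Henselian equation. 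Transporting the Eisenstein bound from $\tau$ to $f$ through that rational expression is an extra step you have not supplied, and it is not routine.

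This is exactly the work the paper (following Lemma 2.2 of Tougeron \cite{To2}) does: it collapses to one variable via $x_i\mapsto ux_i$, sets $e=\ord_u\,\partial P^*/\partial y(u,x,f^*)$, performs the Tschirnhausen shift $y=u^{e+1}y'+{f^*}^{(2e+1)}$ and the rescalings $y'=Ry''$, $u=R^2u'$, and only then obtains an equation of the form $A_1+(1+u'A_2)y''+u'{y''}^2A_3=0$ with $A_i\in\ovl\Q[\tt,x,u']$, to which Hensel's lemma in $\ovl\Q[\tt,x][[u']]$ applies. Your coefficient recursion and the rescaling $y\mapsto Ry$ to absorb the pure $y^j$ monomials are correct and would close the induction \emph{once you are in the Henselian situation} --- that bookkeeping is sound and close in spirit to what the paper ultimately does at the last stage --- but the reduction to that situation is the heart of the lemma, and your sketch skips it.
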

%%%%%%%%%%%%
\begin{proof}[Proof of Lemma \ref{eis}]
Let $P(x,y)\in\ovl\Q(\tt_1,\ldots, \tt_r)[x,y]$ be a minimal polynomial of $f$, i.e. a generator of the kernel  of the ring morphism :
$$\ovl\Q(\tt_1,\ldots,\tt_r)[x,y]\lgw \ovl\Q(\tt_1,\ldots, \tt_r)[[x]]$$
$$p(x,y)\lgm p(x,f(x))$$ 
Let us set
$$e:=\ord_x\left(\frac{\partial P}{\partial y}(x,f(x))\right).$$
We have that $e<\infty$ since $P(x,y)$ is a minimal polynomial of $f(x)$. \\
Let us write $f=\sum_{\a\in\N^n}f_\a(\tt)x^\a$ where $\tt=(\tt_1,\ldots,\tt_r)$ and $f_\a(\tt)\in\ovl\Q(\tt)$. Let $b(\tt)\in\ovl\Q[\tt]$ be a common denominator of the $f_\a(\tt)$ for $|\a|\leq 2e+1$. We have that Lemma \ref{eis} is satisfied by $f$ if and only if it is satisfied by $b(\tt)f$. Thus we may replace $f$ by $b(\tt)f$. In this case a minimal polynomial of $b(\tt)f$ is
$$P'(x,y):=b(\tt)^{\deg_y(P)}P\left(x,\frac{y}{b(\tt)}\right).$$
Moreover by multiplying $P'(x,y)$ by an element of $\ovl\Q(\tt_1,\ldots,\tt_r)$ we may assume that $P'(x,y)\in\ovl\Q[\tt_1,\ldots,\tt_r][x,y]$. Then we have
$$e=\ord_x\left(\frac{\partial P'}{\partial y}(x,b(\tt)f(x))\right).$$
Thus we may replace $f$ by $b(\tt)f$ and assume that $f_\a(\tt)\in\ovl\Q[\tt]$ for $|\a|\leq 2e+1$.\\
\\
We define
$$P^*(u,x,y):=P(ux_1,\ldots,ux_n,y)\in\ovl\Q[\tt_1,\ldots,\tt_r,x_1,\ldots,x_n][u,y]$$
and
$$f^*(u,x):=f(ux_1,\ldots,ux_n)$$
where $u$ is a new indeterminate. Then $P^*(u,x,f^*(u,x))=0$ so $f^*\in\ovl\Q(t,x)\lg u\rg$.\\
Let us denote by $ {f^*}^{(2e+1)}(u)$ the $(2e+1)$-truncation of $f^*(u)$ (i.e. we remove from $f^*(u)$ all the monomials which are divisible by $u^{2e+2}$). Then we have that
$$P^*(u,x,{f^*}^{(2e+1)})\in (u)^{2e+2}\ \text{ and }  \frac{\partial P^*}{\partial y}(u,x,{f^*}^{(2e+1)})\in (u)^e\backslash(u)^{e+1}.$$
Let us set
$$y=u^{e+1}y'+{f^*}^{(2e+1)}$$
where $y'$ is a new indeterminate. Then we have that
$$P^*(u,x,y)=P^*(u,x,{f^*}^{(2e+1)})+\frac{\partial P^*}{\partial y}(u,x,{f^*}^{(2e+1)})u^{e+1}y'+u^{2e+2}{y'}^2Q(u,y')$$
for some polynomial $Q$.
Thus the equation $P^*(u,x,y)=0$ is equivalent to
\begin{equation}\label{eq1}\frac{P^*(u,x,{f^*}^{(2e+1)})}{u^{2e+1}}+\frac{\frac{\partial P^*}{\partial y}(u,x,{f^*}^{(2e+1)})}{u^e}y'+u{y'}^2Q(u,y')=0.\end{equation}
Since $f_\a(\tt)\in\ovl\Q[\tt]$ for $|\a|\leq 2e+1$ we have that ${f^*}^{(2e+1)}\in\ovl\Q[\tt,x,u]$. Since  the coefficients in \eqref{eq1} are polynomials in $u$, $x$ and the $f_\a(\tt)$ for $|\a|\leq 2e+1$, they belong to $\ovl\Q[\tt,x,u]$.
Let  $R(\tt,x)\in\ovl\Q[\tt,x]$ be defined by 
$$R(\tt,x)=\left(\frac{\frac{\partial P^*}{\partial y}(u,x,{f^*}^{(2e+1)})}{u^e}\right)_{|u=0}.$$
Since $\ord_u\left(\frac{P^*(u,x,{f^*}^{(2e+1)})}{u^{2e+1}}\right)\geq 1$ we see that $R(\tt,x)^2$ divides 
$$\dfrac{P^*(R(\tt ,x)^2u',x,{f^*}^{(2e+1)}(R(\tt ,x)^2u'))}{(R(\tt ,x)^2u')^{2e+1}}$$ where $u'$ is a new indeterminate. \\
Thus by replacing $y'$ by $R(\tt,x)y''$ and $u$ by $R(\tt,x)^2u'$ in Equation \eqref{eq1}, and dividing it by $R(\tt,x)^2$ we have that \eqref{eq1} is equivalent to
\begin{equation}\label{eq2}A_1(\tt ,x,u' )+(1+u' A_2(\tt ,x,u' ))y''+u' {y''}^2A_3(\tt ,x,u' )=0\end{equation}
where the $A_i$ belongs to $\ovl\Q[\tt,x,u' ]$.  
By the implicit function theorem (or Hensel's Lemma) this equation has a unique solution in $\ovl\Q\lg\tt ,x, u' \rg$ which is necessarily $g^*:=\frac{f^*(R(\tt ,x)^2u' )}{R(\tt ,x)}$. Moreover we can also apply Hensel's Lemma to this equation to see that it has a unique solution in the completion of $\ovl\Q[\tt,x,u']$ with respect to the ideal generated by $u'$, i.e. in the ring $\ovl\Q[\tt,x][[u']]$. Thus
$$g^*\in\ovl\Q[\tt,x][[u']]\cap\ovl\Q\lg \tt,x, u' \rg.$$
  In particular the coefficients $g^*_k$ defined by $g^*(u' )=\sum_{k\geq 0}g^*_k{u' }^k$ are polynomials over $\ovl\Q$ depending on the $\tt_i$ and the $x_j$. Moreover we have that
$$f^*(u)=\sum_{k\geq 0}\frac{g^*_k(\tt ,x)}{R(\tt ,x)^{2k-1}}u^k.$$
On the other hand we have that
$$f^*(u)=\sum_{k\geq 0}\left(\sum_{|\a|=k}f_\a(\tt ) x^\a\right)u^k$$
hence
\begin{equation}\label{eq3}\sum_{|\a|=k}f_\a(\tt ) x^\a=\frac{g^*_k(\tt ,x)}{R(\tt ,x)^{2k-1}} \ \ \forall k\in\N.\end{equation}
For every  $\a\in\N^n$, let us write $f_\a(\tt )=\frac{h_\a(\tt )}{l_{|\a|}(\tt )}$ where $h_\a(\tt )$, $l_{|\a|}(\tt )\in\ovl\Q[\tt]$ and $l_{|\a|}(\tt )$ is coprime with $\sum_{|\a|=k}h_\a(\tt ) x^\a$. Then $l_{|\a|}(\tt )$ divides $R(\tt ,x)^{2|\a|-1}$. Let $r(\tt )$ be the greatest divisor of $R(\tt ,x)$ belonging to $\ovl\Q[\tt]$. Then there exists $d_{|\a|}(\tt )\in\ovl\Q[\tt]$ such that $l_{|\a|}(\tt )d_{|\a|}(\tt )=r(\tt )^{2|\a|-1}$. Thus we have that 
$$f_\a(\tt )=\frac{h_\a(\tt )d_{|\a|}(\tt )}{r(\tt )^{2|\a|-1}}.$$
This proves the lemma.
\end{proof}

Theorem \ref{thm_rat} allows us to prove the  following version of the Nested Artin-P\l oski-Popescu Approximation Theorem:

\begin{thm}
\label{nest_ploski} 
Let $f(x,y)\in \ovl\Q \lg x\rg[y]^p$ and let us consider a solution $y(x)\in\C \{x\}^m$ of $$f(x,y(x))=0.$$
 Let us assume that $y_i(x)$ depends only on $(x_1,\ldots, x_{\si(i)})$ where $i\lgm \si(i)$ is an increasing function. Then there exist two sets of indeterminates  $z=(z_1,\ldots, z_s)$ and $t=(t_1,\ldots,t_r)$, an increasing function $\t$, convergent power series $z_i(x)\in\C \{x\}$ vanishing at 0 such that $z_1(x)$, \ldots, $z_{\t(i)}(x)$ depend only on $(x_1,\ldots, x_{\si(i)})$, complex numbers $\tt_1$, \ldots, $\tt_r\in\C$ and  an  algebraic power series vector solution $y(t,x,z)\in \ovl\Q\lg t,x,z\rg^m$ of
 $$f(x,y(t,x,z))=0,$$ such that for every $i$, 
 $$y_i(t,x,z)\in\ovl \Q \lg  t,x_1,\ldots,x_{\si(i)},z_1,\ldots,z_{\t(i)}\rg,$$
 $y(\tt,x,z)$ is well defined and
 $y(x)=y(\tt,x,z(x)).$
\end{thm}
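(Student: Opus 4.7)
The plan is a two-step reduction. First, I would temporarily ignore the field of coefficients and invoke the classical (complex-coefficient) Nested Artin-P\l oski-Popescu Approximation Theorem applied to the polynomial system $f(x,y)=0\in\ovl\Q\lg x\rg[y]^p\subset\C\lg x\rg[y]^p$ with the nested convergent solution $y(x)$. This produces a set of new indeterminates $z=(z_1,\ldots,z_s)$, an increasing function $\t$, convergent power series $z_j(x)\in\C\{x\}$ vanishing at $0$ with $z_1(x),\ldots,z_{\t(i)}(x)$ depending only on $(x_1,\ldots,x_{\si(i)})$, and an algebraic power series vector $\wdt y(x,z)\in\C\lg x,z\rg^m$ with the nested dependence $\wdt y_i\in\C\lg x_1,\ldots,x_{\si(i)},z_1,\ldots,z_{\t(i)}\rg$, such that $f(x,\wdt y(x,z))=0$ in $\C\lg x,z\rg^p$ and $y(x)=\wdt y(x,z(x))$.

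Second, I would apply Theorem \ref{thm_rat} to the system $f(x,Y)=0$, now regarded with both $x$ and $z$ playing the role of base variables, using the algebraic power series solution $\wdt y(x,z)\in\C\lg x,z\rg^m$ with complex coefficients. Theorem \ref{thm_rat} furnishes parameters $t=(t_1,\ldots,t_r)$, a point $\tt\in\C^r$ lying in the common domain of convergence of all the Taylor coefficients, and an algebraic power series vector $y(t,x,z)\in\ovl\Q\lg t,x,z\rg^m$ such that $f(x,y(t,x,z))=0$ and $y(\tt,x,z)=\wdt y(x,z)$. Combining this with $y(x)=\wdt y(x,z(x))$ yields the required identity $y(x)=y(\tt,x,z(x))$; the substitution $z=z(x)$ after specialization at $\tt$ is licit because the $z_j(x)$ vanish at $0$ and the series $y(\tt,x,z)$ is convergent near the origin.

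The main obstacle, and the place where real care is needed, is to verify that the nested structure
$$y_i(t,x,z)\in\ovl\Q\lg t,x_1,\ldots,x_{\si(i)},z_1,\ldots,z_{\t(i)}\rg$$
survives the construction of Theorem \ref{thm_rat}. Tracing through the proofs of Lemma \ref{lem_alg} and the Eisenstein Lemma \ref{eis}, that construction acts monomial-by-monomial in the $(x,z)$-variables: each coefficient of $\wdt y_i$ is replaced by a rational expression in $t$, made polynomial after clearing denominators via the Eisenstein trick, without mixing between distinct $(x,z)$-monomials. Consequently, if $\wdt y_i$ has no monomial involving any $x_j$ with $j>\si(i)$ or any $z_k$ with $k>\t(i)$, the same holds for $y_i(t,x,z)$. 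One should also apply Theorem \ref{thm_rat} to the entire vector $\wdt y$ at once rather than component by component, so that a single system of parameters $t$ and a single evaluation point $\tt$ serve all components simultaneously.
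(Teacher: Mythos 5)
Your proposal is correct and follows the same two-step route as the paper: apply the complex-coefficient Nested Artin--P\l oski--Popescu Theorem (Theorem 1.2 of \cite{BPR}) to produce a nested algebraic solution $\wdt y(x,z)\in\C\lg x,z\rg^m$, then apply Theorem \ref{thm_rat} to the whole vector $\wdt y$ with $(x,z)$ playing the role of base variables. You additionally spell out why the nested variable-dependence survives the construction of Theorem \ref{thm_rat}---a point the paper asserts without comment---and your tracing through Lemma \ref{lem_alg} and the Eisenstein Lemma \ref{eis} (the coefficient-wise clearing of denominators, the uniform rescaling, and the substitution $u\mapsto\phi_1(t)$, $v\mapsto\phi_2(t)$ none of which enlarge the $(x,z)$-support) is accurate and a worthwhile clarification.
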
 
\begin{proof}
By Theorem 1.2 \cite{BPR} there exist a new set  of indeterminates $z=(z_1,\ldots, z_s)$, an increasing function $\t$, convergent power series $z_i(x)\in\C \{x\}$ vanishing at 0 such that $z_1(x)$, \ldots, $z_{\t(i)}(x)$ depend only on $(x_1,\ldots, x_{\si(i)})$, and  an  algebraic power series vector solution $y(x,z)\in \C\lg x,z\rg^m$ of
 $$f(x,y(x,z))=0,$$ such that for every $i$, 
 $$y_i(x,z)\in\C \lg  x_1,\ldots,x_{\si(i)},z_1,\ldots,z_{\t(i)}\rg,$$
 and
 $y(x)=y(x,z(x)).$\\
   Then we apply Theorem \ref{thm_rat} to the vector $y(x,z)$.
\end{proof}

%%%%%%%%%%%%%%%%%%%%%%%%%%%%%%%%%%%%%%%%%%%%%%%%%%%%%%%%%%%%%%%%%%%%%%%%%%%%%%%%%%%%%%%%%%%%%%%%%%%%%%%%%%%%%%%%%%%%%%%%%%%%%%%%%%%%%%%%%%%%%%%%%%%%%%%%%%%%%%%%%%%%%%%%%%%%%%%%%%%%%%%%%%%%%%%%%%%%%%%%%%%%%%%%%%%%%%%%%%%%%%%%%%%%%%%%%%%%%%%%%%%%%%%%%%%%%%%%%%%%%%%%

\section{Proof of Theorem \ref{thm1}}

The proof is similar to the proof of Theorem 1.2 \cite{BPR} and so we will refer several times to this paper for details. For convenience $x^{n-1}$ will denote the vector of indeterminates $(x_1,\ldots,x_{n-1})$ and, more generally, $x^i$ will denote the vector of indeterminates $(x_1,\ldots,x_i)$.\\
Firstly we consider the case $\K=\C$. Let $g_1$, \ldots, $g_k\in\C\{x\}$ be the defining equations of $(V,0)$. By a linear change of coordinates we may assume that the $g_i$ are Weierstrass polynomials in $x_n$:
 \begin{align*}
g_{s} ( x)= x_n^{r_s}+ \sum_{j=1}^{r_s} a_{n-1,s,j} (x^{n-1}) x_n^{r_s-j} \ \ \forall s=1,\ldots, k
\end{align*}
and 
\begin{equation}\label{mult} \text{mult}_0(g_s)=r_s \ \ \forall s=1,\ldots, k.\end{equation}
Then the $a_{n-1,s,j}$ are arranged in a row vector $a_{n-1}\in\C\{x^{n-1}\}^{p_n}$ with $p_n=\sum_sr_s$.  Let $f_n$ denote the product of the $g_s$. Let $\D_{n,i}$ denote the $i$-th generalized discriminant of $f_n$ seen as a polynomial in $x_n$  (see 4.2 \cite{BPR}). This is a polynomial depending on $a_{n-1}$. Then let $\D_{n,j_n}(a_{n-1})$ be the first non-vanishing generalized discriminant.\\
After a linear change of coordinates in $x_1$, \ldots, $x_{n-1}$ we may assume, by the Weierstrass Preparation Theorem,  that:
$$\D_{n,j_n}(a_{n-1}) =u_{n-1} (x^{n-1}) (x_{n-1}^{p_{n-1}}
 + \sum_{j=1}^{p_{n-1}} a_{n-2,j} (x^{n-2}) x_{n-1}^{p_{n-1}-j} ),$$
where $u_{n-1}(0)\neq 0$ and for all $j$, $a_{n-2,j}(0)=0$. \\
We carry on with this construction (exactly 
as in 4.2 \cite{BPR}) and define a sequence of Weierstrass polynomials $f_i(x^i)$ for $i=1$, \ldots, $n-1$ such that
 $f_i= x_i^{p_i}+ \sum_{j=1}^{p_i} a_{i-1,j} (x^{i-1}) x_i^{p_i-j}  $
 is the Weierstrass polynomial associated to the first non identically zero generalized discriminant $\Delta_{i+1,j_{i+1}} ( a_{i} )$ of $f_{i+1}$, 
where   $a_{i}$ denotes the vector $(a_{i,1} , \ldots , a_{i,p_{i+1}} )$:
  \begin{align}\label{polyf_i}
 \Delta_{i+1,j_{i+1}} ( a_{i} ) =  u_{i} (x^{i})  (x_i^{p_i}+ \sum_{j=1}^{p_i} a_{i-1,j} (x^{i-1}) x_i^{p_i-j}  ) ,  
 \quad i=0,\ldots,n-1.  
\end{align}
 Thus the vector  
of power series $a_i$ satisfies  
  \begin{align}\label{disci}
\Delta_{i+1,k} ( a_{i} )\equiv 0 \qquad k<j_{i+1}   ,  \quad i=0,\ldots ,n-1. 
\end{align}
In particular  $\Delta_{1,j_1}(a_0)$ is a constant.\\
Then we use Theorem \ref{nest_ploski} to see that there exist two sets of indeterminates  $z=(z_1,\ldots, z_s)$ and $t=(t_1,\ldots, t_r)$, an increasing function $\tau$,  convergent power series $z_i(x)\in\C \{x\}$ {vanishing at 0}, complex numbers $\tt_1$, \ldots, $\tt_r\in\C$, algebraic power series $u_i (t,x^i,z)  \in \ovl\Q\lg t,x^{i},z_1,\ldots,z_{\tau(i)}\rg$ and  vectors of algebraic power series 

$$a_i(t,x^i,z) \in\ovl\Q\lg t,x^{i},z_1,\ldots,z_{\tau(i)}\rg^{p_i}$$
 such that the following holds:
\begin{enumerate}

\item[a)]  $z_1(x),\ldots, z_{\tau(i)}(x)$ depend only on $(x_1,\ldots, x_{i})$,
\item[b)] $a_i(t,x^i,z),\ u_i (t,x^i,z)$ are solutions of \eqref{polyf_i} and
\eqref{disci} 
\item[c)] $a_i(x^i)= a_i(\tt,x^i,z(x^i)),\   u_i(x^i)= u_i(\tt,x^i,z(x^i))$.\\
\end{enumerate}
Let $\ovl u_i$ be the constant coefficient of $u_i(x^i)$. Because $z(0)=0$ we have that $\ovl u_i=u_i(\tt,0,0)$ and $u_i(t,0,0)\in\ovl\Q\langle t\rangle$. In particular $u_i(\tt,0,0)\neq 0$.  Let $\g : \mathcal U\lgw \C^r$ be the analytic map defined by
$$\g(\la)=(1-\la)\q+\la \tt$$ where $\mathcal U$ is an  open connected neighborhood of the closed unit disc in $\C$ and $\q\in\ovl\Q^r$. Because $u_i(\tt,0,0)\neq 0$ and $\ovl\Q$ is dense in $\C$ we may choose $\q$ close enough to $\tt$ 
 such that 
$$u_i(\g(\la),0,0)\neq 0 \ \forall i, \ \forall \la\in \mathcal U.$$
Again because $\ovl\Q$ is dense in $\C$ we can find  $\q\in\ovl\Q$ close enough to $\tt$  such that the following are, for all $\la\in \mathcal U$, well defined convergent power series in $x$:
\begin{align*}%\label{polynomiaisdef:F_n(t)}
 F_n(\la,x): = \prod_s G_s(\la,x)  , \quad G_{s} (\la, x):=  x_n^{r_s}+ 
 \sum_{j=1}^{r_s} a_{n-1,s,j} (\g(\la),x^{n-1}, \la z(x^{n-1})) x_n^{r_s-j}  ,
\end{align*}
 \begin{align*}%\label{polynomiaisdef:F_i(t)}
 F_i(\la,x) := x_i^{p_i}+ \sum_{j=1}^{p_i} a_{i-1,j} (\g(\la),x^{i-1}, \la z(x^{i-1}) ) x_i^{p_i-j} , \quad i=1, \ldots , n-1,
\end{align*}
 \begin{align*} 
u_i(\g(\la),x^i,z(x^i)), \quad i=1, \ldots , n-1.\end{align*}
Finally we set $F_0\equiv 1$.  
Because $u_i(\g(\la),0,z(0))\neq 0$,  the family $F_{i} (\la, x)$ satisfies the assumptions of Theorem 3.3 \cite{PP} with $|\la|\leq 1$, i.e. the family is Zariski equisingular. Moreover by \eqref{mult} we have that
$$\text{mult}_0(G_s)=r_s \ \ \forall s=1,\ldots, k$$
so the familly is Zariski equisingular with transverse projections (see \cite{PP} Definition 4.1). So by Theorem 4.3 \cite{PP} this family is a regular Zariski equisingular family and so by Theorem  7.1 \cite{PP} it is Whitney equisingular. Thus $\{F_n(0,x)=0\}$ and $\{F_n(1,x)=0\}=\{f_n(x)=0\}$ are homeomorphic and the homeomorphism between them can be chosen to be subanalytic and arc-analytic. We have that
$F_n(0,x)\in\ovl\Q\langle x\rangle$ thus, by Theorem 3.2 \cite{BPR},  we may assume that $(V,0)$ is the germ of a Nash set defined over $\ovl\Q$. \\
When $\K=\R$ we may also assume that $(V,0)$ is the germ of a Nash set defined over $\ovl\Q\cap\K$. This follows  from the complex case by the same argument used in the proof of Corollary 4.1 \cite{BPR}.\\
Then we conclude with the following theorem:

\begin{thm}\label{AM}
Let $(V,0)\subset (\K^n,0)$ be a Nash set germ defined over $\ovl\Q\cap\K$. Then there exists a local Nash diffeomorphism $h:(\K^n,0)\rightarrow (\K^n,0)$ such that $h(V)$ is the germ of an algebraic subset of $\K^n$ defined over $\ovl\Q\cap\K$.

\end{thm}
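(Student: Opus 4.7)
This theorem is the $\ovl\Q\cap\K$-refinement of the Artin--Mazur-style statement for Nash set germs (cf.\ Theorem~3.2 of \cite{BPR}). The strategy is to apply the Artin--Mazur construction to the Nash defining functions of $V$, producing an algebraic model of $V$ in a higher-dimensional space smooth at the origin, and then to project back via Noether normalization to obtain the required Nash diffeomorphism of $(\K^n,0)$. The new feature compared to \cite{BPR} is that every construction must be tracked so as to stay inside $\ovl\Q\cap\K$.

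I would begin by writing $(V,0)$ locally as the zero set of Nash function germs $f_1,\ldots,f_s\in(\ovl\Q\cap\K)\lg x\rg$. Since each $f_i$ is algebraic over $(\ovl\Q\cap\K)[x]$, the Artin--Mazur construction yields an integer $N\geq 0$, algebraic power series $\psi_1,\ldots,\psi_N\in(\ovl\Q\cap\K)\lg x\rg$ vanishing at $0$, and polynomials $P_1,\ldots,P_{s+N}\in(\ovl\Q\cap\K)[x,y,z]$ (with $y=(y_1,\ldots,y_s)$ and $z=(z_1,\ldots,z_N)$) satisfying $P_i(x,f(x),\psi(x))=0$ for every $i$, together with the Jacobian condition $\det(\partial P/\partial(y,z))(0,0,0)\neq 0$. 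By the implicit function theorem, the algebraic variety $\widetilde Y:=\{P_1=\cdots=P_{s+N}=0\}\subset\K^{n+s+N}$ is smooth of dimension $n$ at the origin, defined over $\ovl\Q\cap\K$, and locally coincides with the graph of the Nash embedding $\sigma(x)=(x,f(x),\psi(x))$. The subvariety $\widetilde V:=\widetilde Y\cap\{y_1=\cdots=y_s=0\}$ is then algebraic over $\ovl\Q\cap\K$ and is Nash-isomorphic to $V$ via $\sigma$.

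Next, I would apply Noether normalization over the infinite field $\ovl\Q\cap\K$ to obtain a linear projection $\pi:\K^{n+s+N}\to\K^n$ defined over $\ovl\Q\cap\K$ whose restriction to $\widetilde Y$ is a finite polynomial map; a generic such $\pi$ moreover restricts to an isomorphism on the tangent space $T_0\widetilde Y$, so that $\pi|_{\widetilde Y}$ is a local Nash isomorphism at the origin. Setting $h:=\pi\circ\sigma$ gives a local Nash diffeomorphism $(\K^n,0)\to(\K^n,0)$ defined over $\ovl\Q\cap\K$, and $h(V)=\pi(\widetilde V)$ is the image of an algebraic set under a finite polynomial map, hence an algebraic subset of $\K^n$ defined over $\ovl\Q\cap\K$. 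The main obstacle is the coefficient-tracking in the Artin--Mazur step: one has to verify that the auxiliary germs $\psi_j$ and the polynomials $P_i$ can indeed be chosen over $\ovl\Q\cap\K$. This rests on the fact that $\ovl\Q\cap\K$ is algebraically closed in $\K$ (it is $\ovl\Q$ when $\K=\C$ and the field of real algebraic numbers when $\K=\R$), so that any element of $\K$ algebraic over $\ovl\Q\cap\K$ already lies there; this closure property guarantees that the minimal polynomial, Hensel, and implicit function manipulations used throughout the Artin--Mazur algorithm all preserve the ground field $\ovl\Q\cap\K$.
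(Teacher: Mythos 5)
Your approach is in the Artin--Mazur spirit, but it is not the route the paper takes, and the final step has a genuine gap. The paper does \emph{not} run the Artin--Mazur/Noether-normalization machinery by hand: it simply invokes Bochnak--Kucharz's Proposition~2, restated here as Proposition~\ref{AM2}, where $X$ is taken to be the \emph{normalization} of the Zariski closure of the graph of $f$, $s:U\to X$ is a Nash section of the first projection, and $\phi$ is a polynomial map with $f=\phi\circ s$. The target algebraic germ is realized as a \emph{preimage} $\phi^{-1}(0)$, and the normalization guarantees that $s(U)$ is a full connected component of $p^{-1}(U)\cap X$, so the germ of $\phi^{-1}(0)$ at $s(0)$ is exactly $s(V)$ with no stray branches. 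The coefficient-tracking point you raise (that $\ovl\Q\cap\K$ is algebraically closed in $\K$, resp.\ real closed, so all Hensel/IFT steps stay over $\ovl\Q\cap\K$, and that density of $\ovl\Q\cap\K$ lets one choose a generic linear map there) is the same observation the paper uses to upgrade Bochnak--Kucharz to Proposition~\ref{AM2}~iv).

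The gap in your version is the assertion that $h(V)=\pi(\widetilde V)$ is \emph{the germ of} an algebraic set. Two things go wrong. First, even when $\pi|_{\widetilde Y}$ is finite and an isomorphism on $T_0\widetilde Y$, the fiber $\pi^{-1}(0)\cap\widetilde V$ can contain points other than $\sigma(0)=0$; the branches of $\widetilde V$ at those points also map into every neighborhood of $0$ in $\K^n$, so the germ at $0$ of the Zariski-closed set $\pi(\widetilde V)$ may strictly contain the germ $h(V)=\pi(\sigma(V))$. You would need to argue (and then verify that it can be done over $\ovl\Q\cap\K$) that $\pi$ can be chosen so that $\pi^{-1}(0)\cap\widetilde V=\{0\}$; this is not a consequence of the two conditions you impose on $\pi$. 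Second, and independently, over $\K=\R$ the image of a real algebraic set under a finite polynomial map need not be Zariski-closed (e.g.\ $x\mapsto x^2$ on $\R$ is finite but $\pi(\R)=[0,\infty)$), so ``image of an algebraic set under a finite polynomial map, hence algebraic'' is false in the real case. The paper's formulation sidesteps both issues precisely because it exhibits $h(V)$ as a preimage $\phi^{-1}(0)$ inside the normalization $X$, never as a forward image.
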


\begin{proof} This follows from  Proposition \ref{AM2} given below which is a slight modification of Proposition 2 \cite{BK}. Indeed let $f: U\lgw \K^m$ be a Nash function such that $f^{-1}(0)=V$. Then by Proposition \ref{AM2} we have that $V=s^{-1}(\phi^{-1}(0))$. But $s: U\lgw s(U)$ is a Nash diffeomorphism by Prop. \ref{AM2} ii). So we set $h=s$ and $h(V)$ is an algebraic set equal to $\phi^{-1}(0)$, again by using the notations of Prop. \ref{AM2}.
\end{proof}

\begin{prop}\label{AM2}
Let $f: U\lgw \K^m$ be a Nash map defined on an open connected set $U\subset \K^n$ by algebraic power series with coefficients in $\ovl\Q\cap\K$. Then there exist an algebraic set $X\subset \K^n\times\K^N$, a polynomial map $\phi:X\lgw \K^m$ and a Nash map $s:U\lgw \K^n\times\K^N$ satisfying the following properties:
\begin{enumerate}
\item[i)] $s(U) \subset \text{Reg}(X)$ is a connected component of $p^{-1}(U)\cap X$, where $p:\K^n\times\K^N\lgw \K^n$ is the first projection,
\item[ii)] $p\circ s=\text{Id}_U$,
\item[iii)] $f=\phi\circ s$,
\item[iv)] the coefficients of the polynomials defining $X$ and $\phi$ are in $\ovl\Q\cap\K$.
\end{enumerate}

\end{prop}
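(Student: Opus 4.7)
The plan is to follow the Artin-Mazur style construction of Proposition~2 in \cite{BK}, tracking the field of definition of every polynomial and Nash function produced along the way so that an input defined over $\ovl\Q \cap \K$ yields output defined over $\ovl\Q \cap \K$.

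Writing $f = (f_1, \ldots, f_m)$, since each $f_i$ is a Nash function defined over $\ovl\Q \cap \K$, it satisfies an irreducible polynomial equation $P_i(x, y_i) = 0$ with $P_i \in (\ovl\Q \cap \K)[x, y_i]$. Fixing $P_i$ as the minimal polynomial of $f_i$, the derivative $Q_i := \partial P_i / \partial y_i$ satisfies $Q_i(x, f_i(x)) \not\equiv 0$ on $U$ by coprimality of $P_i$ and $Q_i$ in $\K(x)[y_i]$.

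The candidate variety is then $X \subset \K^n \times \K^{2m}$ cut out by the $2m$ equations
$$P_i(x, y_i) = 0, \qquad w_i Q_i(x, y_i) - 1 = 0 \quad (i = 1, \ldots, m),$$
together with $\phi : X \lgw \K^m$ given by $(x, y, w) \mapsto y$ and $s : U \lgw \K^n \times \K^{2m}$ defined by $s(x) := \bigl(x, f(x), Q(x, f(x))^{-1}\bigr)$. The defining polynomials of $X$ and $\phi$ have coefficients in $\ovl\Q \cap \K$, yielding (iv). The Jacobian of the defining equations with respect to $(y, w)$ is block triangular with invertible diagonal blocks on $X$, since the relation $w_i Q_i = 1$ forces $Q_i \neq 0$; hence $X$ is smooth along itself and the projection $p : X \lgw \K^n$ is \'etale, giving (i) modulo a standard closedness check showing that $s(U)$ is a full connected component of $p^{-1}(U) \cap X$. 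Items (ii) and (iii) are immediate from the definition of $s$.

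The main obstacle is that $Q_i(x, f_i(x))$ may vanish at individual points of $U$, in which case $s$ as written is not defined everywhere on $U$. This is precisely the technical subtlety handled in \cite{BK}: one enlarges $X$ by further auxiliary coordinates encoding certain algebraic functions extracted from $f$ and its Jacobian data, so that an extended section is defined on all of $U$ and hits a full connected component of the enlarged $p^{-1}(U) \cap X$. The only modification required here is the observation that every such auxiliary function remains algebraic over $(\ovl\Q \cap \K)[x]$ and therefore admits a minimal polynomial with coefficients in $\ovl\Q \cap \K$; hence the enlarged construction stays defined over $\ovl\Q \cap \K$ throughout, establishing (iv) in full generality.
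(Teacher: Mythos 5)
Your explicit construction of $X$ by the equations $P_i(x,y_i)=0$ and $w_iQ_i(x,y_i)-1=0$ is the natural first attempt, and you correctly identify that it breaks down on the locus where $Q_i(x,f_i(x))$ vanishes: the section $s$ is not globally defined on $U$, and moreover the image of $U$ under a would-be rescued section has no reason to be a \emph{closed} component of $p^{-1}(U)\cap X$, which is what i) requires. Your repair, however, is only a gesture. You say one ``enlarges $X$ by further auxiliary coordinates encoding certain algebraic functions extracted from $f$ and its Jacobian data,'' but this does not describe what \cite{BK} actually does, and you do not give any mechanism by which the enlarged section becomes everywhere defined or lands in a full connected component of the fibre. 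What Bochnak--Kucharz do is take $X$ to be the \emph{normalization} of the Zariski closure of the graph of $f$; it is the universal property and finiteness of normalization, together with the connectedness of $U$, that yield the everywhere-defined section $s$ into $\mathrm{Reg}(X)$ and the component statement in i). Framing this as ``add some algebraic auxiliary functions'' leaves a genuine gap, because it is precisely the global, canonical nature of normalization (not an ad hoc list of local algebraic functions) that produces the required component.

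There is a second, independent gap concerning iv). In your version $\phi$ is the coordinate projection $(x,y,w)\mapsto y$, so no genericity issue arises; but once $X$ is the normalization, $\phi$ is the restriction to $X$ of a \emph{generic} linear map $\K^{n+N}\to\K^m$, and one must justify that this generic choice can be made with coefficients in $\ovl\Q\cap\K$. The paper does this explicitly using the density of $\ovl\Q\cap\K$ in $\K$. Your writeup never engages with this point. Likewise, the assertion that the enlarged $X$ is defined over $\ovl\Q\cap\K$ ``because every auxiliary function is algebraic over $(\ovl\Q\cap\K)[x]$'' is not a proof; the correct justification, as in the paper, is that the Zariski closure of the graph is defined over $\ovl\Q\cap\K$ and that normalization of a variety defined over a given field of characteristic zero is again defined over that field. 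So the proposal takes a different route from the paper, but as written it has two concrete gaps: the repair of the section is not actually carried out, and the field-of-definition claims for the corrected $X$ and for the generic $\phi$ are asserted rather than argued.
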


\begin{proof}
The existence of $X$, $\phi$ and $s$ satisfying $i)$, $ii)$ and $iii)$ are given by Proposition 2 \cite{BK} in the general case where $f$ is defined by algebraic power series with coefficients in $\K$. In fact $X$ is the normalization of the Zariski closure of the graph of $f$ and $\phi$ is the restriction to $X$ of a generic linear map $\K^{n+N}\lgw \K^m$. In particular, since $f$ is assumed to be defined over $\ovl\Q\cap\K$, we have that $X$ is defined by polynomial equations with coefficients in $\ovl\Q\cap\K$. Because $\phi$ is generic we can choose such a $\phi$ with coefficients in $\ovl\Q\cap\K$ since this field is dense in $\K$. 
 
\end{proof}

%%%%%%%%%%%%%%%%%%%%%%%%%%%%%%%%%%%%%%%%%%%%%
%%%%%%%%%%%%%%%%%%%%%%%%%%%%%%%%%%%%%%%%%%%%%
%%%%%%%%%%%%%%%%%%%%%%%%%%%%%%%%%%%%%%%%%%%%%

%%%%%%%%%%%%%%%%%%%%%%%%%%%%%%%%%%%%%%%%%%%%%

\section{Proof of Theorem \ref{thm2}}

The proof is similar to the proof of Theorem 1.3 \cite{BPR} and so once again  we will refer several times to this paper for some details.\\
We begin to consider the case $\K=\C$. Let $g_1$, \ldots, $g_p$ be power series defining $(V,0)$. Let us replace $n$ by $n-1$ to assume that $(V,0)\subset (\C^{n-1},0)$ and let $(x_2, \ldots,
x_n)$ denote the  coordinates in $\C^{n-1}$. Let us set $g_0:=g$. 
After a linear change of coordinates in $x_2$, \ldots, $x_n$ (i.e preserving $x_1$)  we have that 
$$\prod_{m=0}^p(x_1-g_m (x_2,...,x_n))$$
 is $x_n$-regular. Thus we may write 
 $$
\prod_{m=0}^p(x_1-g_m(x_2,\ldots,x_n))= u_{n}  (x)(x_n^{p_n}+ \sum_{j=1}^{p_n} a_{n-1,j} (x^{n-1}) x_n^{p_n-j}), 
$$
where $u_{n}(0)\ne 0$ and $a_{n-1,j}(0)=0$.  We set 
$$f_{n} (x)= x_n^{p_n}+ \sum_{j=1}^{p_n} a_{n-1,j} (x^{n-1}) x_n^{p_n-j}$$ so that 
\begin{align}\label{new2}
u_n(x) f_n(x)=\prod_{m=0}^p(x_1-\sum_{k=2}^nx_kb_{m,k}(x_2,\ldots,x_n))
\end{align}
with $g_m=\sum_{k=2}^nx_kb_{m,k}$ for some power series $b_{m,k}$ since $g_m(0)=0$ for every $m$.
We denote by $b  \in\C \{ x\} ^{p(n-1)}$ (resp.  $a_{n-1} \in \C\{x^{n-1}\}^{ p_{n}}$) the vector of the coefficients $b_{m,k}$
(resp.  of the coefficients  $a_{n-1,j}$).\\
 Again we denote by $\Delta_{n,i}$ the generalized discriminants  of $f_n$ which are 
polynomials in $a_{n-1}$.  
  Let $j_n$ be the positive integer such that  
 $$\Delta_{n,i} ( a_{n-1} )\equiv 0 \qquad i<j_n,$$
and  $\Delta_{n,j_n}  ( a_{n-1} ) \not \equiv 0$.  
 After a linear change of coordinates $(x_2, \ldots,x_{n-1})$ we may write 

$$
 \Delta_{n,j_n} ( a_{n-1} ) =  u_{n-1}  (x^{n-1})  x_1^{q_{n-1}} (x_{n-1}^{p_{n-1}}+
  \sum_{j=1}^{p_{n-1}} a_{n-2,j} (x^{n-2}) x_{n-1}^{p_{n-1}-j} ) , 
$$
where $u_{n-1}(0)\ne 0$ and $a_{n-2,j}(0)=0$.  We set

$$f_{n-1} = x_{n-1}^{p_{n-1}}+
  \sum_{j=1}^{p_{n-1}} a_{n-2,j} (x^{n-2}) x_{n-1}^{p_{n-1}-j} $$
    and the vector 
of its coefficients $a_{n-2,j}$ is denoted  by $a_{n-2} \in \C\{x^{n-2}\}^{ p_{n-1}}$.  
Let $j_{n-1}$ be the positive integer such that 
$$\Delta_{n-1,k}(a_{n-2})\equiv 0\ \ \forall k<j_{n-1} \text{ and } \Delta_{n-1,j_{n-1}}(a_{n-2})\not\equiv 0.$$
  Then again we divide  $\Delta_{n-1,j_{n-1}}  $ 
by the maximal power of $x_1$ and, after a linear change of coordinates $(x_2, ...,x_{n-2})$, we denote  by $f_{n-2} ( x^{n-2})$ the associated 
Weierstrass polynomial.  

We carry on with this construction and  
 define a sequence of Weierstrass  polynomials $f_{i} (  x^i )$, $i=1, \ldots, n-1$, such that 
 $f_i= x_i^{p_i}+ \sum_{j=1}^{p_i} a_{i-1,j} (x^{i-1}) x_i^{p_i-j}  $ is the Weierstrass polynomial associated to the first non identically zero generalized discriminant $\Delta_{i,j_i} ( a_{i+1} )$ of $f_{i+1}$,  divided by the maximal power of $x_1$, 
where  $a_{i}= (a_{i,1} , \ldots , a_{i,p_{i}} )$:
  \begin{align}\label{polynew:f_i}
 \Delta_{i+1,j_{i+1}} ( a_{i} ) =  u_{i} (x^{i})  x_1^{q_i} (x_i^{p_i}+ \sum_{j=1}^{p_i} a_{i-1,j} (x^{i-1}) x_i^{p_i-j}  ) ,  
 \quad i=0,\ldots,n-1.  
\end{align}
 Thus the vector  
of power series $a_i$ satisfies  
  \begin{align}\label{discnew:i}
\Delta_{i+1,k} ( a_{i-1} )\equiv 0 \qquad k<j_{i+1}   ,  \quad i=0,\ldots,n-1. 
\end{align}
\\
Then we use Theorem \ref{nest_ploski} to see that there exist two sets of indeterminates  $z=(z_1,\ldots, z_s)$ and $t=(t_1,\ldots, t_r)$, an increasing function $\tau$,  convergent power series $z_i(x)\in\C \{x\}$ {vanishing at 0}, complex numbers $\tt_1$, \ldots, $\tt_r\in\C$, algebraic power series $u_i (t,x^i,z) \in \ovl\Q\lg t,x^{i},z_1,\ldots,z_{\tau(i)}\rg$ and  vectors of algebraic power series 
$$b(t,x,z)\in\ovl\Q\lg t,x,z\rg^{p(n-1)},$$
$$a_i(t,x^i,z) \in\ovl\Q\lg t,x^{(i)},z_1,\ldots,z_{\tau(i)}\rg^{p_i},$$
 such that the following holds:
\begin{enumerate}

\item[a)] $z_1(x),\ldots, z_{\tau(i)}(x)$ depend only on $(x_1,\ldots, x_{i})$,
\item[b)] $a_i(t,x^i,z),\ u_i (t,x^i,z),\ b(t,x,z)$ are solutions of  \eqref{new2}, \eqref{polynew:f_i} and
\eqref{discnew:i},
\item[c)] $a_i(x^i)= a_i(\tt,x^i,z(x^i))$,   $u_i(x^i)= u_i(\tt,x^i,z(x^i)), b(x)=b(\tt,x,z(x))$.\\
\end{enumerate}
Then we repeat what we did in the proof of Theorem \ref{thm1}.
Let $\ovl u_i$ be the constant coefficient of $u_i(x^i)$. Because $z(0)=0$ we have that $\ovl u_i=u_i(\tt,0,0)$ and $u_i(t,0,0)\in\ovl\Q\langle t\rangle$. In particular $u_i(\tt,0,0)\neq 0$.  Let $\g : \mathcal U\lgw \C^r$ be the analytic map defined by
$$\g(\la)=(1-\la)\q+\la \tt$$ where $\mathcal U$ is an  open connected neighborhood of the closed unit disc in $\C$ and $\q\in\ovl\Q^r$. Because $u_i(\tt,0,0)\neq 0$ and $\ovl\Q$ is dense in $\C$ we may choose $\q$ close enough to $\tt$ 
 such that 
$$u_i(\g(\la),0,0)\neq 0 \ \forall i, \ \forall \la\in \mathcal U.$$
Again because $\ovl\Q$ is dense in $\C$ we can find  $\q\in\ovl\Q$ close enough to $\tt$  such that the following are, for all $\la\in \mathcal U$, well defined convergent power series in $x$:
\begin{align*}%\label{polynomiaisdef:F_n(t)}
 F_n(\la,x) :=  x_n^{p_n}+ 
 \sum_{j=1}^{p_n} a_{n-1,j} (\g(\la),x^{n-1}, \la z(x^{n-1})) x_n^{p_n-j} ,
\end{align*}
 \begin{align*}%\label{polynomiaisdef:F_i(t)}
 F_i(\la,x) := x_i^{p_i}+ \sum_{j=1}^{p_i} a_{i-1,j} (\g(\la),x^{i-1}, \la z(x^{i-1}) ) x_i^{p_i-j} , \quad i=0, \ldots , n-1,
\end{align*}
\begin{align*} 
 u_i(\g(\la),x^i,z(x^i)), \quad i=1, \ldots , n-1.\end{align*}
We have that
$$u_n(\g(\la),x,\la z(x)) F_n(\la, x)=\prod_{m=0}^p(x_1-\sum_{k=2}^nx_kb_{m,k}(\g(\la),x,\la z(x))).$$
By the implicit function theorem or the Weierstrass Preparation Theorem
$$x_1-\sum_{k=2}^nx_kb_{m,k}(\g(\la),x,\la z(x))=v_m(\la,x)(x_1-G_m(\la,x_2,\ldots,x_n))$$
where $v_m(\la,x)\in\C\{\la,x\}$, $G_m(\la,x_2,\ldots,x_n)\in\C\{\la,x_2,\ldots,x_n\}$ and $v_m(0,0)\neq 0$. Because
$$x_1-\sum_{k=2}^nx_kb_{m,k}(\g(0),x, 0)\in\ovl\Q\lg x\rg$$
we have that
$$v_m(0,x),\ G_m(0,x_2,\ldots, x_n)\in\ovl\Q\lg x\rg$$
by unicity in the Weierstrass Preparation Theorem. We set 
$$\hat g_m(y):=G_m(0,y),\ m=0,\ldots,p$$
where $y=(y_1,\ldots,y_{n-1})$ is a new vector of indeterminates. \\
Then, for both cases $\K=\C$ or $\R$, we conclude exactly as in \cite{BPR} (see the end of 5.4 and Proposition 5.3 \cite{BPR}) to show that
there is a homeomorphism $h : (\K^n,0) \to (\K^n,0)$ such that $(h(V),0)$ is a germ of a Nash subset of $\K^n$ defined over $\ovl\Q\cap\K$ and $g\circ h$ 
 is the germ of a Nash function defined over $\ovl\Q\cap\K$.  \\
 \\
 Then we deduce from Proposition \ref{AM2} the following analogue of Theorem 5.4 \cite{BPR}:
 
 \begin{cor}\label{AM}
 Let $g_i$ be algebraic powers series with coefficients in $\ovl\Q\cap\K$ defining Nash function germs $g_i:(\K^n,0)\lgw (\K,0)$. Then there exist a Nash diffeomorphism $h:(\K^n,0)\lgw (\K^n,0)$ and Nash units $u_i:(\K^n,0)\lgw \K$, $u_i(0)\neq 0$, such that, for every $i$, $u_i(x)g_i(h(x))$ are polynomial function germs defined over $\ovl\Q\cap \K$. 
 \end{cor}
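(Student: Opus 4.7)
The plan is to follow the proof of Theorem 5.4 in \cite{BPR} almost verbatim, substituting Proposition \ref{AM2} for the instance of the Artin--Mazur theorem used there.

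First I would apply Proposition \ref{AM2} to the Nash map $f := (g_1,\ldots,g_m) : (\K^n,0)\to (\K^m,0)$, whose components are algebraic power series with coefficients in $\ovl\Q\cap\K$. This provides an algebraic subset $X\subset \K^n\times\K^N$ defined over $\ovl\Q\cap\K$, a polynomial map $\phi : X\to\K^m$ with coefficients in $\ovl\Q\cap\K$, and a Nash section $s : U\to X$ of the first projection $p$, with $s(U)\subset\text{Reg}(X)$ a connected component of $p^{-1}(U)\cap X$, satisfying $g_i = \phi_i\circ s$ for each $i$. In particular $s(0)$ is a smooth point of $X$ of dimension $n$, which is the geometric input we need.

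From this point on I would reproduce the construction of \cite{BPR}: it exploits the smooth algebraic structure of $X$ at $s(0)$ together with the polynomial nature of $\phi$ to produce a Nash diffeomorphism $h : (\K^n,0)\to (\K^n,0)$ and Nash units $u_i$ such that $u_i(x)\,g_i(h(x)) = P_i(x)$ for some polynomials $P_i$. Because $X$ and $\phi$ are defined over $\ovl\Q\cap\K$, every polynomial appearing in this construction---the equations cutting out $X$, the components of $\phi$, and any auxiliary generic linear forms that are needed (which can themselves be chosen with coefficients in $\ovl\Q\cap\K$ by density and the openness of the genericity conditions)---has coefficients in $\ovl\Q\cap\K$. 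Consequently $h$ and $u_i$ have Taylor series in $(\ovl\Q\cap\K)[[x]]$ and the polynomials $P_i$ also have coefficients in $\ovl\Q\cap\K$.

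I expect the only genuinely new point relative to \cite{BPR} to be the arithmetic one, and it is precisely where Proposition \ref{AM2} does the work: the classical Artin--Mazur construction of \cite{BK} uses a generic linear map $\K^{n+N}\to\K^m$, and Proposition \ref{AM2} shows that such a generic linear map may in fact be chosen over $\ovl\Q\cap\K$. So the main obstacle of the Corollary is essentially resolved already in the proof of Proposition \ref{AM2}, and what remains is the straightforward transposition of the geometric mechanism of \cite{BPR} into this arithmetic setting.
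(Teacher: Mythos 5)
Your proposal is essentially the same as the paper's: both reduce the corollary to the proof of Theorem 5.4 in \cite{BPR}, with Proposition \ref{AM2} supplying the arithmetic refinement of the Artin--Mazur step. The only (minor, cosmetic) difference is in packaging: the paper isolates an intermediate statement --- the $\ovl\Q\cap\K$-version of Theorem 5 ``(i)$\Rightarrow$(iv)'' of \cite{BK}, asserting that after a Nash diffeomorphism the ideal of each irreducible analytic component of a Nash set germ defined over $\ovl\Q\cap\K$ is generated by polynomials with $\ovl\Q\cap\K$ coefficients --- and applies it to the germ cut out by the product $g_1\cdots g_m$, then factors each $g_i\circ h$ into a unit times a product of those polynomial generators. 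Your version applies Proposition \ref{AM2} directly to the tuple $(g_1,\ldots,g_m)$ and gestures at ``reproducing \cite{BPR}''; that is fine, but if you were to write it out you would still need the irreducible-component/factorization argument of \cite{BK} to pass from $g_i=\phi_i\circ s$ (with $\phi_i$ polynomial on $X$, not on $\K^n$) to the desired identity $u_i(x)\,g_i(h(x))=P_i(x)$, so it is worth making that step explicit as the paper does. You also omit that the real case $\K=\R$ requires a short complexification argument as in \cite{BPR}, which the paper mentions separately.
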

 
 \begin{proof}
We have the following fact: let $(Y,0)\subset (\K^n,0)$ be a Nash set germ defined by algebraic power series with coefficients in $\ovl\Q\cap\K$. Then there exists a Nash diffeomorphism $h:(\K^n,0)\lgw (\K^,0)$ such that for every irreducible analytic component $W$ of $(Y,0)$, the ideal of functions vanishing on $h(W)$ is generated by polynomials with coefficients in $\ovl\Q\cap\K$. Indeed this fact follows from Proposition \ref{AM2} by applying word for word the proof of Theorem 5 "(i) $\Longrightarrow$ (iv)" \cite{BK}.\\
Thus when $\K=\C$ this fact applied to the germ $(Y,0)$ defined by the products of the $g_i$ proves the theorem.\\
When $\K=\R$ we conclude as done for this case in the proof of Theorem 5.4 \cite{BPR}.
 
 \end{proof}

Let us recall that we have shown that there is a homeomorphism $h : (\K^n,0) \to (\K^n,0)$ such that $(h(V),0)$ is the germ of a Nash subset of $\K^n$ defined over $\ovl\Q\cap\K$ and $g\circ h$ 
 is the germ of a Nash function defined over $\ovl\Q\cap\K$. So we conclude the proof of Theorem \ref{thm2} by using  Theorem \ref{AM} and Theorem 5.5 \cite{BPR}.

\end{document}